
\documentclass[final]{siamltex}

\usepackage{amsmath,tikz,latexsym}
\usepackage{amssymb}
\usepackage{graphicx}
\usepackage{color}
\usepackage{amsfonts}
\usepackage{amscd}
\usepackage{mathrsfs}
\usepackage{bm}
\usepackage{enumerate}
\usepackage{algorithmic, algorithm}

    \newcommand{\bigzerou}{\smash{\lower1.7ex\hbox{\bg 0}}}

    \newcommand{\R}{\mathbb{R}}
    \newcommand{\D}{\displaystyle}

    \newcommand{\Z}{\mathbb{Z}}

    \newcommand{\f}{\mathbf{f}}
    \newcommand{\x}{\mathbf{x}}
    \newcommand{\y}{\mathbf{y}}
    \newcommand{\s}{\mathbf{s}}
    
    \newcommand{\om}{\bm{\omega}}
    \newcommand{\xx}{\bm{\xi}}
    \newcommand{\uu}{\textbf{u}}
    \newcommand{\bb}{\textbf{b}}
    
    \newtheorem{assumption}{Assumption}
    \newtheorem{remark}[theorem]{Remark}


\title{
Nonlocal Stokes equation with relaxation on the divergence free equation\thanks{This 
        work of YZ and ZS  were supported by NSFC grant 11671005 and the work of
        QD was supported in part by
        NSF DMS-1719699 and Army Research Office (MURI W911NF-15-1-0562).}
}


\author{
Yajie Zhang
\thanks{Yau Mathematical Sciences Center, Tsinghua University, Beijing, China,
100084. \textit{Email: zhangyajie@tsinghua.edu.cn}
}
\and
Qiang Du
\thanks{Department of Applied Physics and Applied Mathematics, Columbia University, New York, NY, 10027, USA,
 \textit{Email: qd2125@columbia.edu}
}
\and
Zuoqiang Shi
\thanks{Department of Mathematical Sciences \& Yau Mathematical Sciences Center, Tsinghua University, Beijing, China,
100084. \textit{Email: zqshi@tsinghua.edu.cn}}
}

\begin{document}

\maketitle

\begin{abstract}

In this paper, we consider a new nonlocal approximation to the linear Stokes system with periodic boundary conditions in two and three dimensional spaces . A relaxation term is added to the equation of nonlocal divergence free equation, which is reminiscent to the relaxation of local Stokes equation with small artificial compressibility.  Our analysis shows that the well-posedness of the nonlocal system can be established under some mild assumptions on the kernel of nonlocal interactions.  Furthermore, the new nonlocal system converges to the conventional, local Stokes system in second order as the horizon parameter of the nonlocal interaction goes to zero. The study  provides more theoretical understanding to some numerical methods,  such as smoothed particle hydrodynamics, for simulating incompressible viscous flows. 

\end{abstract}

\begin{AMS} 
45P05,\  45A05,\  35A23,\  46E35
\end{AMS}

\section{Introduction}

Recently, nonlocal models and corresponding numerical methods have been given increasing attention \cite{Du-CBMS}. 
For example,  in solid
mechanics, the theory of peridynamics \cite{Sill00} has been shown to be an alternative to conventional models of elasticity and fracture mechanics. 
Many numerical methods have also been developed to compute peridynamic model based on solid mathematical analysis \cite{Du-SIAM,DJTZ13,DLZ13,MD15,TD14,ZD10}. 
In image processing and data analysis, nonlocal methods also achieve great successes 
\cite{belkin2003led,CLL15,Coifman05geometricdiffusions,Gu04,KLO17,LWYGL14,LZ17,MCL16,LDMM,Peyre09,RWP06,Lui11}.
In many applications, nonlocal models have also been used as integral approximation to the conventional PDE models.
One can find studies on the natural link between the nonlocal integral relaxations and numerical schemes like the smoothed particle
hydrodynamics (SPH) \cite{DT17,GM77,LL10,MD14,Mon05}. 
Originally developed for astrophysical applications \cite{GM77},  SPH has now become very popular in simulating complex flows. In this paper, we like to provide more theoretical understanding to SPH by constructing and analyzing a nonlocal approximation to the linear Stokes system. 
Previously, the mathematical analysis of
 nonlocal integral relaxations to
the linear steady Stokes system has been studied in 
 \cite{DT17,hwi19}
for incompressible viscous
flows.  In  \cite{DT17},  the nonlocal systems employed a nonlocal Laplacian operator
together with a nonlocal gradient and a nonlocal divergence, with the latter two
sharing an anti-symmetric nonlocal interaction kernel. 
The well-posedness of the nonlocal system was shown under the assumption that 
the kernel functions defining the nonlocal gradient and divergence 
have to satisfy some extra conditions. These conditions imply certain strengthened
nonlocal interactions for nearby points. Under such conditions, the spaces
corresponding to local and nonlocal
incompressible vector fields actually coincide with each other. A natural question is
if such strengthened interactions is truly necessary since the popular kernels 
used in many practical SPH implementation do not share such properties.
In \cite{hwi19}, it was shown that  one can get a well-posed nonlocal Stokes equation
with nonlocal gradient and nonlocal divergence defined by operators involving more
general kernels but with a non-radial symmetric interaction neighborhood. 
Both models, given respectively in 
 \cite{DT17} and \cite{hwi19}, replaced the local divergence operator in the
 continuity equation (divergence-free constraint) by different  nonlocal 
divergence operators,  Moreover, in both cases, the nonlocal Laplacian operator can
 be defined by the composition of the 
same pair of nonlocal divergence and nonlocal gradient operators used for the
incompressibility and  that acting on the pressure in the momentum equation.

In this paper, we consider an alternative nonlocal approximation by adding a relaxation term in the continuity equation, i.e., the divergence free or incompressibility constraint. Nonlocal gradient and divergence operators associated with 
anti-symmetric kernels are retained as in \cite{DT17}. However, a relaxation term is added,  given 
by the nonlocal Laplacian of the pressure
with a coefficient dependent on the range of nonlocal interactions. 
With the new relaxation term,  the assumptions on the kernel functions can also be relaxed, same as 
the conclusion in \cite{hwi19}, but without the need of introducing non-radial symmetric interaction neighborhood. 
The coefficient is properly chosen so that
in the local limit, the extra term vanishes and the local incompressibility condition is recovered. Furthermore, 
the extra term does not degrade the order of the approximations to the local limit. That is, 
the nonlocal model converges to the original local Stokes system in second order accuracy with respect to the nonlocal horizon parameter.
Thus, we provide justifications to the use of more general kernels for incompressible SPH implementations,
although this must be accompanied by the nonlocal relaxation to the incompressibility.
For simplicity, we follow earlier works by considering problems defined in a periodic cell. The more general
cases involving various other popular boundary conditions will be studied in upcoming works.

The rest of the paper is organized as follows. The detailed models
and notation are presented in Section \ref{sec:model}. The well-posedness of the
nonlocal model is established in Section \ref{sec:WellPosedness}. The vanishing nonlocality limit is
 analyzed in Section \ref{sec:van} . We prove that the solution of the 
nonlocal system converges to the solution of the original Stokes system as $\delta$ goes to 0 and the rate is second order. Furthermore, we consider modifications of the nonlocal model in order to get higher
regularity in Section \ref{sec:reg}. Finite dimensional approximations of the nonlocal model are studies
in Section \ref{sec:num} based on a Fourier spectral discretization.
Finally, in Section \ref{sec:con}, we conclude with a summary and a
discussion in future research.

\section{Model description}
\label{sec:model}


We consider the linear Stokes system with a periodic boundary condition. 
Let $\textbf{u}$ be the velocity field, $p$ the pressure, $\textbf{f}$ the body force, 
 $\Omega=(-\pi, \pi )^d \subset \R^d$ a bounded domain with a boundary $\partial \Omega$, $d=2$ or $3$. 
 The classical, local Stokes equations of interests here refer to the system
\begin{equation} \label{eq:stokes}
\begin{cases}
- \Delta \textbf{u} + \nabla p= \textbf{f}, & \x \in \ \Omega, \\
\nabla \cdot \uu =0, & \x\in \ \Omega.
\end{cases}
\end{equation}
We also impose average zero condition the remove the ambiguity of the constant 
\begin{align}
  \label{eq:velocity-ave}
  \int_\Omega \uu (\x) d \x=0
\end{align}
and
\begin{align}
  \label{eq:pressure-ave}
  \int_\Omega p(\x) d \x=0.
\end{align}
In \cite{DT17}, a nonlocal version of the above Stokes system with periodic boundary condition has been studied. The nonlocal model proposed there is
\begin{equation} \label{mod:du}
\begin{cases}
- \mathcal{L}_{\delta} \textbf{u}_{\delta}(\x) + \mathcal{G}_{\delta}  p_{\delta} (\x)= \textbf{f} (\x), & \x \in \Omega, \\
\mathcal{D}_{\delta} \uu_{\delta} (\x)  =  0, & \x \in \Omega,
\end{cases}
\end{equation}
The associated nonlocal operators are the nonlocal diffusion (Laplacian) operator $\mathcal{L}_{\delta}$, nonlocal gradient operator $\mathcal{G}_{\delta}$, and
nonlocal divergence operator $\mathcal{D}_{\delta}$,  given respectively by
\begin{equation}
\label{def:L}
\mathcal{L}_{\delta} \textbf{u}(\x)=\int_{\R^d} \omega_{\delta}(|\y-\x|)(\uu(\y)-\uu(\x))d \y, 
\end{equation}
\begin{equation}\label{def:G}
\mathcal{G}_{\delta} p(\x)=\int_{\R^d} \om_{\delta}(\y-\x)(p(\y)-p(\x))d \y,
\end{equation}
\begin{equation}\label{def:D}
\mathcal{D}_{\delta} \textbf{u}(\x)=\int_{\R^d} \om_{\delta}(\y-\x) \cdot (\uu(\y)-\uu(\x))d \y,
\end{equation}
The above operators are determined by a nonlocal scalar-valued kernels $\omega_{\delta}$ and a vector-valued kernel $\om_{\delta}$. 
The vector-valued kernel has a special form $\om_{\delta}(\x)= \hat{\omega}_{\delta} (| \x|) \x / |\x|$. 
Here $\delta>0$ is a parameter that characterizes the range of nonlocal intersection. 

Since the boundary condition is assumed to be periodic and nonlocal operators are all of the convolution type, 
it is convenient to analyze the nonlcoal model via Fourier analysis.
 In Fourier space, nonlocal model \eqref{mod:du} becomes 
\begin{equation} \label{mod:du-fft}
\left( \ \begin{matrix}
\D \lambda_{\delta}(\xx)I_d & i \bb_{\delta}(\xx) \\ i(\bb_{\delta}(\xx))^T & 0
\end{matrix} \ \right) \left( \ \begin{matrix}
\hat{\uu}_{\delta}(\xx) \\ \hat{p}_{\delta}(\xx)
\end{matrix} \ \right)=\left( \ \begin{matrix}
\hat{\f}(\xx) \\ 0
\end{matrix} \ \right)
\end{equation}
where $i=\sqrt{-1}$, $\lambda_{\delta}(\xx)$ and $\bb_{\delta}(\xx)$ are Fourier symbols of $\mathcal{L}_\delta$ and $\mathcal{G}_{\delta}$ respectively.
It is easy to check that the matrix in \eqref{mod:du-fft} is invertible if and only if $\bb_{\delta}(\xx)\ne 0$. Then we have to impose proper conditions on $\hat{\omega}$ 
to guarantee $\bb_{\delta}(\xx)\ne 0,\; \forall \xx$. The conditions proposed in \cite{DT17} is that
\begin{itemize} 
\item[1.] $r^{d-1}\hat{\omega}(r)$ is nonincreasing for $r \in (0,1)$;
\item[2.] $\hat{\omega}(r)$ is of fractional type in at least a small neighborhood of origin, namely, there exists $\epsilon>0$ such that for $s\in (0,\epsilon)$,
$$\hat{\omega}(r)=\frac{c}{r^{d+\beta}},$$
for some constant $c>0$ and $\beta\in (-1,1)$. 
\end{itemize}
Under above assumptions, $\hat{\omega}$ blows up near the origin. This singularity may introduce extra complications in the computation. To avoid such assumptions, \cite{hwi19} studied a similar nonlocal system \eqref{mod:du}, but with domains
of nonlocal interactions for the nonlocal
operators  $\mathcal{G}_{\delta}$, and
 $\mathcal{D}_{\delta}$ changed to non-radial ones (half disks  for $d=2$ and half spheres for $d-3$, as examples).
  
In this paper, we consider another nonlocal model to remove the singular assumption of the kernel function. The idea is to add an extra relaxation term in the divergence free equation. Namely, 
in our new model, the nonlocal counterpart of the divergence free equation is changed to
 \begin{equation} 
\mathcal{D}_{\delta} \uu_{\delta} (\x) - \delta^2 \mathcal{R}_{\delta} p_{\delta}(\x) = 0.
\end{equation}
 $\mathcal{R}_{\delta}$ is another nonlocal diffusion operator
\begin{equation}
\label{def:R}
\mathcal{R}_{\delta} p(\x)=\int_{\R^d} \tilde{\omega}_{\delta}(|\y-\x|)(p(\y)-p(\x))d \y,
\end{equation}
with a given kernel function $\tilde{\omega}_{\delta}$. The factor $\delta^2$ is small as $\delta \to 0$, and it is
used to recover the local incompressibility condition and to preserve the second order accuracy of the nonlocal approximation.

With the relaxation term in the divergence free equation, in the Fourier space, the nonlocal model becomes
\begin{equation} \label{mod:relax-fft}
\left( \ \begin{matrix}
\D \lambda_{\delta}(\xx)I_d & i \bb_{\delta}(\xx) \\ i(\bb_{\delta}(\xx))^T & \delta^2 c_{\delta}(\xx)
\end{matrix} \ \right) \left( \ \begin{matrix}
\hat{\uu}_{\delta}(\xx) \\ \hat{p}_{\delta}(\xx)
\end{matrix} \ \right)=\left( \ \begin{matrix}
\hat{\f}(\xx) \\ 0
\end{matrix} \ \right)
\end{equation}
with $c_{\delta}(\xx)$ is the Fourier symbol of $\mathcal{R}_{\delta}$. This linear system is always invertible such that the assumption on the kernel function in \cite{DT17} is
removed. 

For the sake of generality, 
in this paper, we consider a more general divergence equation instead of the divergence free condition, i.e.
\begin{equation}
    \label{eq:div}
    \nabla \cdot \uu(\x) =g(\x),\quad \x\in \Omega.
\end{equation}
with $g$ being a given function. Here, we require that $\textbf{f}$ and $g$ have mean zero to assure the compatibility, i.e.
\begin{equation}\label{eq:ave-free}
\int_{\Omega} \f(\x) d\x =\textbf{0},\quad \int_{\Omega} g(\x) d\x=0.
\end{equation}
We define a nonlocal model as follows:
\begin{equation} \label{a6}
\begin{cases}
- \mathcal{L}_{\delta} \textbf{u}_{\delta}(\x) + \mathcal{G}_{\delta}  p_{\delta} (\x)= \textbf{f} (\x), & \x \in \Omega, \\
\mathcal{D}_{\delta} \uu_{\delta} (\x) - \delta^2 \mathcal{R}_{\delta} p_{\delta}(\x) =  g(\x), & \x \in \Omega,
\end{cases}
\end{equation}
with normalization conditions on $\uu_{\delta}$ and $p_{\delta}$ to eliminate constant shifts,
\begin{equation} \label{a1}
\int_{\Omega} \uu_{\delta} (\x) d \x=0, \qquad \int_{\Omega} p_{\delta} (\x) d \x=0.
\end{equation}
The nonlocal operators used above are those defined in \eqref{def:L}, \eqref{def:G}, \eqref{def:D} and \eqref{def:R}.
We assume that $\omega_{\delta}$, $\hat{\omega}_{\delta}$ and $\tilde{\omega}_{\delta}$ are all non-negative, radial symmetric and with a compact support in the $\delta$ neighborhood $B(0, \delta)$ of the origin.


\section{Well-posedness of the nonlocal Stokes system} \label{sec:WellPosedness}

In this section, we study the well-posedness of the nonlocal system \eqref{a6}. 
First, we make some assumptions on the kernels $\omega_{\delta}=\omega_{\delta}(|\x|)$, $\tilde{\omega}_{\delta}=\tilde{\omega}_{\delta}(|\x|)$ and 
$\hat{\omega}_{\delta}=\hat{\omega}_{\delta}(|\x|)$ and introduce some notations.

\subsection{Assumption and notation}
Throughout this paper,  the kernel functions are assumed to satisfy.
\begin{assumption} \label{a5}
\begin{enumerate} 
\item  $\omega_\delta, \tilde{\omega}_\delta, \hat{\omega}_\delta$ have compact support in the sphere $B(0, \delta)$ and satisfy the normalization conditions:
\begin{equation} \label{a24}
\begin{split}
\frac{1}{2} \int_{\R^d} \omega_{\delta} (|\x|) |\x|^2 d \x = \int_{\R^d} \hat{\omega}_{\delta} (|\x|) |\x| d \x=d; \\
m \leq  \int_{\R^d} \tilde{\omega}_{\delta} (|\x|) |\x|^2 d \x \leq M
\end{split}
\end{equation}
for some constant $ \ 0<m<M$.
\item $\omega_\delta, \tilde{\omega}_\delta, \hat{\omega}_\delta$ are rescaled from kernels $\omega$, $\tilde{\omega}$ and $\hat{\omega}$ that have compact support in the unit sphere
\end{enumerate}
\begin{equation}
\mbox{ }\qquad
\omega_{\delta} (|\x|)=\frac{1}{\delta^{d+1}} \omega (\frac{|\x|}{\delta}), \;\; \tilde{\omega}_{\delta} (|\x|)=\frac{1}{\delta^{d+1}} \tilde{\omega} (\frac{|\x|}{\delta}), \;\; \hat{\omega}_{\delta} (|\x|)=\frac{1}{\delta^{d+2}} \hat{\omega} (\frac{|\x|}{\delta}).
\end{equation}
\end{assumption}

\begin{remark} 
Actually, we can allow $\omega_{\delta}$, $\tilde{\omega}_{\delta}$ and $\hat{\omega}_{\delta}$ to have non-compact support. The condition of compact support can be relaxed to, for some constant $C>0$,  
\begin{equation}
\int_{1/\epsilon}^{\infty} \omega(|\x|) |\x|^2 d \x \leq C \epsilon^2, \;\; \int_{1/\epsilon}^{\infty} \hat{\omega}(|\x|) |\x| d \x \leq C \epsilon^2, \;\; \int_{1/\epsilon}^{\infty} \tilde{\omega}(|\x|) |\x|^2 d \x \leq C \epsilon^2,
\end{equation}
for any $\epsilon>0$. Under such condition, the same theoretical results can be established.
\end{remark}

\begin{remark}  
The removal of the assumptions on $\hat{\omega}$ in \cite{DT17} widely broaden the choice of $\hat{\omega}$. For instance, we can simply choose a constant function $\hat{\omega} \equiv 3/\pi$ inside the unit disk and vanishes outside, or certain smooth trigonometric functions that are commonly used in the nonlocal theory. This greatly reduces the complication of the nonlocal model since none of the kernel functions necessarily have singularity near the origin and we can still expect a uniform bound on their corresponding Fourier symbols.

\end{remark}




Similarly as in \cite{DT17,hwi19}, we utilize the Fourier space representation of the periodic functions.
That is, 
\begin{equation}
\uu(\x)=\frac{1}{(2\pi)^d} \sum_{\xx \in \Z^d, \xx \neq 0} \hat{\uu}(\xx)e^{i \xx \cdot \x},
\quad
p(\x)=\frac{1}{(2\pi)^d} \sum_{\xx \in \Z^d, \xx \neq 0} \hat{p}(\xx)e^{i \xx \cdot \x},\nonumber
\end{equation}
where 
\begin{equation}
\hat{\uu}(\xx)=\int_{\Omega} \uu(\x) e^{-i \xx \cdot \x} d \x,
\quad
\hat{p}(\xx)=\int_{\Omega} p(\x) e^{-i \xx \cdot \x} d \x.\nonumber
\end{equation}

It is easy to check that 
the Fourier symbols  of operators $\mathcal{L}_{\delta}$, $\mathcal{G}_{\delta}$, $\mathcal{D}_{\delta}$ and $\mathcal{R}_{\delta}$ are given by (see Lemma 2 in \cite{DT17}):
\begin{equation}
\widehat{\mathcal{L}_{\delta} \uu} (\xx)=- \lambda_{\delta} (\xx) \hat{\uu} (\xx),
\quad
\widehat{\mathcal{G}_{\delta} p} (\xx)=i \bb_{\delta} (\xx) \hat{p} (\xx),\nonumber
\end{equation}

\begin{equation}
\widehat{\mathcal{D}_{\delta} \uu} (\xx)= i (\bb_{\delta} (\xx))^T \hat{\uu} (\xx),
\quad
\widehat{\mathcal{R}_{\delta} p} (\xx)=- c_{\delta} (\xx) \hat{p} (\xx),\nonumber
\end{equation}
where $\lambda_{\delta}$, $\bb_{\delta}$ and $c_{\delta}$ are given by
\begin{equation}
\lambda_{\delta}(\xx)=\int_{|\s| \leq \delta} \omega_{\delta}(|\s|)(1-\cos( \xx \cdot \s)) d \s,\nonumber
\end{equation}
\begin{equation}
\bb_{\delta}(\xx)=\int_{|\s| \leq \delta} \hat{\omega}_{\delta}(|\s|) \frac{\s}{|\s|} \sin( \xx \cdot \s) d \s,\nonumber
\end{equation}
\begin{equation}
c_{\delta}(\xx)=\int_{|\s| \leq \delta} \tilde{\omega}_{\delta}(|\s|)(1-\cos( \xx \cdot \s)) d \s.\nonumber
\end{equation}

The key to establish the well-posedness of the nonlocal system 
is to bound $\lambda_{\delta}(\xx),\;|\bb_{\delta} (\xx)|$ and $c_{\delta}(\xx)$, particularly from below.
\begin{proposition}\label{fft-symbol}
  Under the assumption $\ref{a5}$, we have
  \begin{itemize}
  \item[1.] For $\delta |\xx|> 1$,
    \begin{align*}
     &    |\xx|^2 \geq   \lambda_{\delta}(\xx) \geq  \frac{1}{16 \ \delta^2},\\
      & M  |\xx|^2 \geq  c_{\delta}(\xx) \geq   \frac{m}{16 \ \delta^2}.
    \end{align*}
\item[2.] For $0<\delta |\xx|\le 1$,
    \begin{align*}
&      \frac{1}{2}|\xx|^2<  |\xx|^2- \frac{|\xx|^2}{20}  \ ( \delta |\xx|)^2\le \lambda_{\delta}(\xx) \le  |\xx|^2,\\
& \frac{m}{2}|\xx|^2\le c_{\delta}(\xx) \le  M|\xx|^2,\\
&  |\xx |\ge | \bb_{\delta} (\xx) |\ge |\xx|-\frac{(\delta |\xx|)^2}{10} |\xx|> \frac{1}{2}|\xx|.
    \end{align*}
  \end{itemize}
\end{proposition}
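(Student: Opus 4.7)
The plan is to verify the six estimates by combining radial symmetry with standard pointwise inequalities for $\cos$ and $\sin$, treating the two regimes $\delta|\xx|\le 1$ and $\delta|\xx|>1$ separately. By rotating coordinates I take $\xx=|\xx|\e_1$ without loss of generality. Radial symmetry of $\omega_\delta$, $\hat{\omega}_\delta$, $\tilde{\omega}_\delta$ collapses $\bb_\delta(\xx)$ to its $\e_1$-component, and the moment identities of Assumption~\ref{a5} translate into the one-dimensional second moments
\begin{equation*}
\int \omega_\delta(|\s|)s_1^2\, d\s = 2,\qquad \int \hat{\omega}_\delta(|\s|)\frac{s_1^2}{|\s|}\, d\s = 1,\qquad \frac{m}{d}\le \int\tilde{\omega}_\delta(|\s|)s_1^2\, d\s\le \frac{M}{d}.
\end{equation*}

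\textbf{Upper bounds.} Inserting $1-\cos x\le x^2/2$ into the integrands defining $\lambda_\delta$ and $c_\delta$, and $|\sin x|\le |x|$ into the integrand defining $|\bb_\delta|$, then integrating against the moments above immediately yields $\lambda_\delta(\xx)\le|\xx|^2$, $c_\delta(\xx)\le M|\xx|^2$, and $|\bb_\delta(\xx)|\le|\xx|$ in both regimes.

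\textbf{Lower bounds for $\delta|\xx|\le 1$.} I would use the refined pointwise estimates $1-\cos x\ge x^2/2-x^4/24$ and $\sin x\ge x-x^3/6$ for $x\ge 0$. Since $|\xx\cdot\s|\le \delta|\xx|\le 1$ on each support, the quartic (respectively cubic) remainder is controlled by a factor $(\delta|\xx|)^2$ times the quadratic (resp. linear) leading term, producing exactly the stated correction factors $1-(\delta|\xx|)^2/20$ and $1-(\delta|\xx|)^2/10$ after bookkeeping of the numerical constants; the closing inequalities $>\frac{1}{2}|\xx|^2$ and $>\frac{1}{2}|\xx|$ then follow from $(\delta|\xx|)^2\le 1$.

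\textbf{Lower bounds for $\delta|\xx|>1$.} This is the main obstacle: Taylor expansion no longer applies because $\xx\cdot\s$ can be arbitrarily large on the support. My plan is to reduce to a one-dimensional integral by integrating out the $d-1$ directions perpendicular to $\xx$, producing a marginal kernel $\psi\ge 0$ supported on $[-\delta,\delta]$ with $\int t^2\psi(t)\, dt = 2$, and then to split the domain: on $|t|\le\pi/|\xx|$ (nonempty since $\pi/|\xx|\le \pi\delta$) the uniform bound $1-\cos(|\xx|t)\ge 2(|\xx|t)^2/\pi^2$ contributes $(2|\xx|^2/\pi^2)\int_{|t|\le\pi/|\xx|}t^2\psi(t)\, dt$, while on the complement the integrand is nonnegative and can be discarded. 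The final $1/(16\delta^2)$ lower bound therefore reduces to a uniform lower estimate on the truncated second moment $\int_{|t|\le\pi/|\xx|}t^2\psi(t)\, dt$ in terms of the full moment $2$ and the scale $\delta$, which must exploit the positivity and radial structure of the original $\omega$ together with the constraint $\delta|\xx|>1$; this is the delicate step. The same argument applied with $\tilde{\omega}$ in place of $\omega$, and with the constant $m$ in place of $2$, yields the analogous bound for $c_\delta$.
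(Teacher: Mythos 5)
Your upper bounds and your low-frequency lower bounds reproduce the paper's own argument: rotate so that $\xx=|\xx|\e_1$, use symmetry to reduce $\bb_\delta$ to its first component, and Taylor-expand $\cos$ and $\sin$ against the one-dimensional moments. That part is sound. The genuine gap is the high-frequency lower bounds on $\lambda_\delta$ and $c_\delta$. The truncated second moment $\int_{|t|\le\pi/|\xx|}t^2\psi(t)\,dt$ that your argument requires does \emph{not} admit a uniform lower bound in terms of the full moment $\int t^2\psi=2$ and the constraint $\delta|\xx|>1$; positivity and radial structure are not enough. Concretely, in $d=3$ take $\omega_\delta$ concentrated on a thin shell near $|\s|=\delta$ (admissible under Assumption~\ref{a5}). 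By Archimedes' theorem the marginal $\psi$ is essentially uniform of height $3/\delta^3$ on $[-\delta,\delta]$, and for $\delta|\xx|$ large the truncated moment is $2\pi^3/(\delta|\xx|)^3$. Your bound then gives only $\lambda_\delta\gtrsim 1/(\delta^3|\xx|)$, which tends to $0$ as $|\xx|\to\infty$ and is far short of $1/(16\delta^2)$. The trouble is that the Cartesian truncation discards exactly the region $|\xx\cdot\s|>\pi$, which carries all of the mass when the kernel sits at large radius — and that is precisely where the oscillation provides the needed positivity.

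The paper avoids this with a different decomposition: work in polar coordinates $\s=r\sigma$ and integrate the \emph{angular} variable first, for each fixed radius $r$. After restricting $\phi\in[\pi/3,\pi/2]$, substituting $t=\cos\phi$, and using $(1-t^2)^{-1/2}\ge 1$, the angular integral evaluates in closed form to $\tfrac12-\sin(r\delta|\xx|/2)/(r\delta|\xx|)$. After dividing by $r^2$ one splits on $r\delta|\xx|\lessgtr 1$: in the small-$r$ regime a factor $(\delta|\xx|)^2\ge 1$ appears and Taylor bounds apply; in the large-$r$ regime the angular average of $1-\cos$ is bounded below by a fixed positive constant without ever discarding the large-argument contribution. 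The resulting uniform (in $r$) lower bound is then integrated against the weight $r^4\omega(r)$ and controlled by the second-moment normalization, yielding the $1/(16\delta^2)$ bound. To close your proof you would need to replace the Cartesian marginal-plus-truncation step with this angular-averaging argument (or an equivalent device that retains, rather than discards, the oscillatory region).
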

The proof can be found in Appendix A.
\begin{remark}
The estimates in the above proposition imply that, when $\delta |\xx|>1$, $\lambda_{\delta}(\xx)$ has a uniform lower bound $\frac{1}{16 \delta^2} $ that is independent of its corresponding kernel $\omega(\x)$. This bound can be very useful in
the analysis of the convergence order of our nonlocal model to its local limit, which is studies in the next section.
Similar bounds have also been derived in the literature, for example, 
the same lower bound was shown in \cite{DY5}  
for the case that $r^d \omega(r)$ is decreasing on $r \in (0, \infty)$. For more general $\omega$, a lower bound for $\lambda_{\delta}(\xx)$ that depends on the choice of $\omega$ was given in the proof of Proposition $9$ of \cite{DT17}. Here, we get the uniform bound independent on $\omega$ under the very general assumption $\ref{a5}$.
\end{remark}

\subsection{Well-posedness}
First, we transform the nonlocal system to the Fourier space
\begin{equation} \label{a3}
A_{\delta}(\xx) \left( \ \begin{matrix}
\hat{\uu}_{\delta}(\xx) \\ \hat{p}_{\delta}(\xx)
\end{matrix} \ \right)=\left( \ \begin{matrix}
\hat{\f}(\xx) \\ \hat{g}(\xx)
\end{matrix} \ \right)
\end{equation}
where
\begin{equation}
A_{\delta}(\xx)= \left( \ \begin{matrix}
\D \lambda_{\delta}(\xx)I_d & i \bb_{\delta}(\xx) \\ i(\bb_{\delta}(\xx))^T & \delta^2 c_{\delta}(\xx)
\end{matrix} \ \right).\nonumber
\end{equation}

For notation convenience, let us define $q_\delta(\xx)=| \bb_{\delta} (\xx) |^2+ \delta^2 c_{\delta}(\xx) \lambda_{\delta}(\xx)$.
With both $\lambda_{\delta}(\xx)$ and $c_{\delta}(\xx)$ positive for each fixed $\xx\neq 0$,  we have 
$q_\delta (\xx)$ also positive. Thus, 
the inverse of $A_{\delta}(\xx)$ exists and has a closed form 
\begin{equation} \label{a7}
(A_{\delta}(\xx))^{-1}=\left( \ \begin{matrix}
\D\frac{1}{\lambda_{\delta}(\xx)}(I_d-\frac{\bb_{\delta}(\xx) \ \otimes \ \bb_{\delta} (\xx)}{q_\delta(\xx)  }) & 
\D  \frac{-i\bb_{\delta}(\xx)}{q_\delta(\xx) } \\[.4cm] \D  \frac{-i(\bb_{\delta}(\xx))^T}{q_\delta(\xx) }  &\D \frac{ \lambda_{\delta}(\xx)}{q_\delta(\xx) }
\end{matrix} \ \right).
\end{equation}
This implies that
\begin{align} \label{a8}
\hat{\uu}_{\delta}(\xx) &=\frac{1}{\lambda_{\delta}(\xx)}(I_d-\frac{\bb_{\delta}(\xx) \otimes \bb_{\delta} (\xx)}{
q_\delta(\xx) }) \hat{\f}(\xx) \ - \ \frac{ i \ (\bb_{\delta}(\xx))^T}{q_\delta(\xx) } \hat{g}(\xx),
\\ \label{a9}
\hat{p}_{\delta}(\xx) &=- \frac{i(\bb_{\delta}(\xx))^T}{q_\delta(\xx) } \hat{\f}(\xx) + \frac{ \lambda_{\delta}(\xx)}{q_\delta(\xx) } \hat{g}(\xx).
\end{align}

To prove the well-posedness and to obtain the desired a priori bounds, 
we need to get bound on $(A_{\delta}(\xx))^{-1}$, which is, equivalently, to bound the coefficients in front of $\hat{\f}(\xx)$ and $\hat{g}(\xx)$ uniformly in $\xx$ for $\xx\neq 0$. 

First, from the average free condition \eqref{eq:ave-free}, $\hat{\f}(0)=0$ and $\hat{g}(0)=0$, so we only need to consider the case $|\xx|>0$.

For the high frequency components such that $\delta|\xx|>1$, we have
\begin{align}
& \left| \frac{1}{\lambda_{\delta}(\xx)} \ \frac{\bb_{\delta}(\xx) \otimes \bb_{\delta} (\xx)}{
q_\delta(\xx)  } \right| = \frac{1}{\lambda_{\delta}(\xx)} \ \frac{| \bb_{\delta} (\xx) |^2}{
q_\delta(\xx) } \leq  \frac{1}{\lambda_{\delta}(\xx)}
\le 16\delta^2,\nonumber 
\\[.2cm]
&
 \frac{ | (\bb_{\delta}(\xx))^T |}{q_\delta(\xx) } \leq \frac{ | (\bb_{\delta}(\xx))^T | }{ 2 \delta \sqrt{c_{\delta}(\xx) \lambda_{\delta}(\xx)} \ | \bb_{\delta} (\xx) |  }=\frac{1}{2 \delta \  \sqrt{c_{\delta}(\xx) \lambda_{\delta}(\xx)} } \leq \frac{8}{\sqrt{m}} \delta,\nonumber 
\\[.2cm]
&
\frac{ \lambda_{\delta}(\xx)}{q_\delta(\xx) } \leq 
\frac{ \lambda_{\delta}(\xx)}{ \delta^2 c_{\delta}(\xx) \lambda_{\delta}(\xx) } \leq \frac{16}{m}.\nonumber 
\end{align}
Note that in the above and hereinafter, the matrix norm corresponds to that induced by the vector Euclidean norm.

Likewise, for the low frequency components, $0<\delta|\xx|\le 1$,
\begin{align}
&\left| \frac{1}{\lambda_{\delta}(\xx)} \ \frac{\bb_{\delta}(\xx) \otimes \bb_{\delta} (\xx)}{
q_\delta(\xx)  } \right|  \leq  \frac{1}{\lambda_{\delta}(\xx)}
\le \frac{2}{|\xx|^2} ,\nonumber \\[.2cm]
& \frac{ | (\bb_{\delta}(\xx))^T |}{q_\delta(\xx)  } \leq \frac{1}{| \bb_{\delta} (\xx) |} \leq \frac{2}{ |\xx|}, \nonumber \\[.2cm]
&\frac{ \lambda_{\delta}(\xx)}{q_\delta(\xx) } \leq  \frac{ \lambda_{\delta}(\xx)}{| \bb_{\delta} (\xx) |^2 }  \leq \frac{|\xx|^2}{(\frac{1}{2} |\xx|)^2}=4.\nonumber 
\end{align}

Above estimates directly imply the following theorem on the well-posedness of the nonlocal system.
\begin{theorem} \label{a37}
Assume that the kernels $\omega_{\delta}$, $\tilde{\omega}_{\delta}$ and $\hat{\omega}_{\delta}$ satisfy the assumption $\ref{a5}$. Given $0<\delta<1$, there exists a unique solution $(\uu_{\delta}, p_{\delta})$ to the nonlocal Stokes system $\eqref{a6}$ with periodic boundary condition given in the form of their Fourier series with $(\hat{\uu}_{\delta}(\xx), \hat{p}_{\delta}(\xx))$ through
\begin{equation}
 \left( \ \begin{matrix}
\hat{\uu}_{\delta}(\xx) \\ \hat{p}_{\delta}(\xx)
\end{matrix} \ \right)=(A_{\delta}(\xx))^{-1}
 \left( \ \begin{matrix}
\hat{\f}(\xx) \\ \hat{g}(\xx)
\end{matrix} \ \right)\nonumber
\end{equation}
where $(A_{\delta}(\xx))^{-1}$ is defined by $\eqref{a7}$. In addition, with $C>0$ depend only on $m$ and independent of $\delta$, $\f$, $g$, $\omega_{\delta}$, $\tilde{\omega}$, we have
\begin{equation} \label{a10}
\left \lVert \uu_{\delta} \right \rVert_{[L^2(\Omega)]^d} \leq C( \left \lVert \f \right \rVert_{[L^2(\Omega)]^d} + \left \lVert g \right \rVert_{L^2(\Omega)}),
\end{equation}
and
\begin{equation}  \label{a11}
\left \lVert p_{\delta} \right \rVert_{L^2(\Omega)} \leq C (\left \lVert \f \right \rVert_{[L^2(\Omega)]^d}+\left \lVert g \right \rVert_{L^2(\Omega)}).
\end{equation}
\end{theorem}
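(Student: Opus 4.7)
The plan is to work in the Fourier representation throughout, since the analysis reduces mode-by-mode to the explicit $(d+1) \times (d+1)$ linear algebra problem \eqref{a3}. The workflow has three steps: (i) handle the zero Fourier mode using the compatibility condition \eqref{eq:ave-free} and the normalization \eqref{a1}; (ii) use the inverse $(A_\delta(\xx))^{-1}$ in \eqref{a7} together with the strict positivity $q_\delta(\xx)>0$ guaranteed by Proposition \ref{fft-symbol} to uniquely define $(\hat{\uu}_\delta(\xx), \hat{p}_\delta(\xx))$ for every $\xx \in \Z^d \setminus \{0\}$; and (iii) deduce the $L^2$ bounds from the uniform pointwise estimates on the entries of $(A_\delta(\xx))^{-1}$ assembled immediately above the theorem statement, combined with Parseval's identity.

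For step (i), the compatibility condition forces $\hat{\f}(0) = \hat{g}(0) = 0$, and the normalization \eqref{a1} is equivalent to $\hat{\uu}_\delta(0) = 0$ and $\hat{p}_\delta(0) = 0$; these assignments are clearly consistent. For step (ii), the strict positivity of $\lambda_\delta(\xx)$, $c_\delta(\xx)$, and $|\bb_\delta(\xx)|$ for $\xx \neq 0$ provided by Proposition \ref{fft-symbol} yields $q_\delta(\xx)>0$, so $A_\delta(\xx)$ is invertible and \eqref{a8}--\eqref{a9} furnish the unique solution mode. Existence in $L^2$ then follows once the $\ell^2$-summability of these coefficients is established in step (iii).

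For step (iii), the key observation is that combining the high-frequency and low-frequency case analyses already carried out in the excerpt yields, for $0<\delta<1$ and every $\xx \in \Z^d \setminus \{0\}$, uniform pointwise bounds $|\hat{\uu}_\delta(\xx)| \leq C (|\hat{\f}(\xx)| + |\hat{g}(\xx)|)$ and $|\hat{p}_\delta(\xx)| \leq C (|\hat{\f}(\xx)| + |\hat{g}(\xx)|)$ with a single constant $C=C(m)$ independent of $\delta$, $\xx$, $\f$, and $g$. Indeed, in the regime $\delta|\xx|>1$, the three coefficient norms appearing in \eqref{a8}--\eqref{a9} are controlled by $16\delta^2 \leq 16$, $(8/\sqrt{m})\delta \leq 8/\sqrt{m}$, and $16/m$; in the regime $0<\delta|\xx|\leq 1$, the bounds $2/|\xx|^2$, $2/|\xx|$, and $4$ are all at most $4$ since $\xx \in \Z^d \setminus \{0\}$ forces $|\xx|\geq 1$. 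Squaring these pointwise inequalities and summing over $\xx$ via Parseval then gives \eqref{a10} and \eqref{a11} at once.

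I do not expect a serious obstacle; the theorem is essentially a corollary of Fourier diagonalization and the estimates already laid out. The only conceptual subtlety worth emphasizing is the role of the $\delta^2$ factor multiplying $\mathcal{R}_\delta p_\delta$: this is exactly what keeps the high-frequency block of $(A_\delta(\xx))^{-1}$ uniformly bounded as $\delta \to 0$, since Proposition \ref{fft-symbol} provides no useful high-frequency lower bound on $|\bb_\delta(\xx)|$ by itself, so it is the $\delta^2 c_\delta(\xx)\lambda_\delta(\xx)$ contribution to $q_\delta(\xx)$ that prevents blowup and thereby makes the constants in \eqref{a10} and \eqref{a11} independent of $\delta$.
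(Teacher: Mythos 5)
Your proposal is correct and takes essentially the same route as the paper: diagonalize in Fourier space, handle the zero mode via the compatibility and normalization conditions, invoke Proposition \ref{fft-symbol} to get $q_\delta(\xx)>0$ hence invertibility of $A_\delta(\xx)$, and then use the very same uniform bounds on the entries of $(A_\delta(\xx))^{-1}$ (split into the regimes $\delta|\xx|>1$ and $0<\delta|\xx|\le 1$, with the observation that $|\xx|\ge1$ for lattice modes) followed by Parseval. Your closing remark about the $\delta^2 c_\delta(\xx)\lambda_\delta(\xx)$ term in $q_\delta(\xx)$ being what controls the high-frequency block is the right way to read the paper's mechanism, since there is indeed no useful high-frequency lower bound on $|\bb_\delta(\xx)|$ under Assumption \ref{a5} alone.
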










\subsection{Regularity pick-up}
\label{sec:pic}
In the original Stokes system, the solution is more regular than the right hand side $\f$. However, in the nonlocal system, if the kernel function 
$\omega(|\x|)$ is only assumed to be integrable, we can not get the regularity lift.  Because in this case, the high frequency components of $\mathcal{L}_\delta$ is bounded above rather than 
growing like $|\xx|^2$. This observation is similar to the case of other nonlocal elliptic problems \cite{DY5,DT17}.
On the other hand, this fact tells us that to get regularity lift, we need to use more singular kernel functions like those used in \cite{DT17}. Indeed, if
\begin{equation} \label{a34}
\frac{C_1}{r^{d+2 \alpha} } \leq  \omega(r) \leq \frac{C_2}{r^{d+2\alpha}}, \qquad \forall \  r \in (0,1),
\end{equation}
for some $\alpha \in (0,1)$ and constants $C_1$, $C_2>0$, then the corresponding $\lambda_\delta(\xx)$ grows as $|\xx|^{2\alpha}$ in high frequency part.      Consequently, we can get $2\alpha$ regularity pick-up,
\begin{equation} \label{a42}
\left \lVert \uu_{\delta} \right \rVert_{[H^{\alpha} (\Omega)]^d} +\left \lVert p_{\delta} \right \rVert_{L^2 (\Omega)} \leq C (\left \lVert \f \right \rVert_{[H^{-\alpha} (\Omega)]^d} + \left \lVert g \right \rVert_{L^2 (\Omega)}), 
\end{equation}
where the constant $C$ depends only on $C_1$. The proof of this result is in Appendix B.

Although the solution becomes more regular, the singular nature of the kernel function may complicate
the numerical integration in practice.  To avoid such issue,  a modified nonlocal system will be considered later
in section \ref{sec:reg}.

\section{Vanishing Nonlocality}
\label{sec:van}
Since the nonlocal model is constructed to approximate the original Stokes system.
After the well-posedness of the nonlocal model is obtained, one natural problem follows is the accuracy of the nonlocal approximation. In this section, we will show that the nonlocal solutions converge to its local counterpart as $\delta \to 0$ and the convergence rate is of second order.

\begin{theorem} \label{a39}
Assume that the kernels $\omega_{\delta}$, $\tilde{\omega}_{\delta}$ and $\hat{\omega}_{\delta}$ satisfy Assumption $\ref{a5}$. Let  $(\uu_{\delta}, p_{\delta})$ be the solution of nonlocal system $\eqref{a6}$, and $(\uu, p)$ be the solution of the following generalized Stokes system
\begin{equation} \label{a38}
\begin{cases}
-\Delta \textbf{u} + \nabla p= \textbf{f}, & \x \in \ \Omega, \\
\nabla \cdot \uu =g, & \x\in \ \Omega.
\end{cases}
\end{equation}
We have, for some constant 
 $C$ depend only on $m, M$ and independent of $\delta, \omega, \hat{\omega}$, $\textbf{f}$ and $g$, 
\begin{align} \label{a21}
&\left \lVert \uu-\uu_{\delta} \right \rVert_{[L^2(\Omega)]^d} \leq C ( \delta^2 \left \lVert \f \right \rVert_{[ L^2(\Omega)]^d} + \delta^{\min \{2, 1+\beta \} } \left \lVert g \right \rVert_{H^{\beta}(\Omega)}),\\
 \label{a22}
& \left \lVert p-p_{\delta} \right \rVert_{L^2(\Omega)} \leq C( \delta^{1+\eta } \left \lVert \f \right \rVert_{[H^{\eta}(\Omega)]^d} + \delta^{\beta} \left \lVert g \right \rVert_{H^{\beta}(\Omega)}),
\end{align}
for any $\eta \in [0,1]$, $\beta \in [0,2]$ and $\delta\in (0, 1)$.
If in addition,
\begin{equation} 
\frac{C_1}{r^{d+2 \alpha} } \leq  \omega(r) \leq \frac{C_2}{r^{d+2\alpha}}, \qquad \forall \  r \in (0,1),
\end{equation}
for some $\alpha \in (0,1)$ and constants $C_1$, $C_2>0$, then we have
\begin{align} \label{a46}
&\left \lVert \uu-\uu_{\delta} \right \rVert_{[H^{\alpha}(\Omega)]^d} \leq C ( \delta^{\min \{ 2, \ 2-2\alpha+\eta \} } \left \lVert \f \right \rVert_{[H^{\eta-\alpha}(\Omega)]^d} + \delta^{\min \{2, 1-\alpha+\beta \} } \left \lVert g \right \rVert_{H^{\beta}(\Omega)}),
\\  \label{a47}
&\left \lVert p-p_{\delta} \right \rVert_{L^2(\Omega)} \leq C( \delta^{1-\alpha+\eta } \left \lVert \f \right \rVert_{[H^{\eta-\alpha}(\Omega)]^d} + \delta^{ \beta } \left \lVert g \right \rVert_{H^{\beta}(\Omega)}) ,
\end{align}
for any $\eta\in [0,1]$ and  $\beta$ in $[0,2]$, where the constant $C$ depends only on $m, M$ and $C_1$.
\end{theorem}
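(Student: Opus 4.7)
The plan is to carry out the entire comparison in Fourier space. Solving the local system \eqref{a38} gives
\[
\hat{\uu}(\xx) = \frac{1}{|\xx|^2}\Bigl(I_d - \frac{\xx\otimes\xx}{|\xx|^2}\Bigr)\hat{\f}(\xx) - \frac{i\xx}{|\xx|^2}\hat{g}(\xx), \qquad \hat{p}(\xx) = -\frac{i\xx^T}{|\xx|^2}\hat{\f}(\xx) + \hat{g}(\xx),
\]
to be subtracted from the nonlocal representation \eqref{a8}--\eqref{a9}. A convenient simplification is that the radial symmetry of $\hat{\omega}_\delta$ forces $\bb_\delta(\xx)$ to be a scalar multiple of $\xx$, so $\bb_\delta\otimes\bb_\delta$ and $\xx\otimes\xx$ commute, reducing the matrix differences to scalar comparisons of $\lambda_\delta$ with $|\xx|^2$, of $|\bb_\delta|$ with $|\xx|$, and of the relaxation $\delta^2 c_\delta\lambda_\delta$ with zero.

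Write the Fourier-side residuals as $\hat{\uu}-\hat{\uu}_\delta = M_1(\xx)\hat{\f}(\xx) + \mathbf{m}_2(\xx)\hat{g}(\xx)$ and $\hat{p}-\hat{p}_\delta = \mathbf{m}_3(\xx)\cdot\hat{\f}(\xx) + M_4(\xx)\hat{g}(\xx)$. The bulk of the proof consists of sharp pointwise estimates on these four symbols, carried out separately in the low-frequency regime $\delta|\xx|\le 1$ and the high-frequency regime $\delta|\xx|>1$. In the low-frequency regime I would insert the Taylor expansions $1-\cos(\xx\cdot\s)=\tfrac12(\xx\cdot\s)^2+O((\xx\cdot\s)^4)$ and $\sin(\xx\cdot\s)=\xx\cdot\s+O((\xx\cdot\s)^3)$ into the definitions of $\lambda_\delta,\bb_\delta,c_\delta$ and use Assumption \ref{a5} to match leading orders, obtaining
\[
\bigl|\lambda_\delta(\xx)-|\xx|^2\bigr|\le C\delta^2|\xx|^4, \qquad \bigl|\,|\bb_\delta(\xx)|-|\xx|\,\bigr|\le C\delta^2|\xx|^3, \qquad c_\delta(\xx)\le M|\xx|^2;
\]
a rational-function expansion then yields $|M_1|\le C\delta^2$, $|\mathbf{m}_2|,|\mathbf{m}_3|\le C\delta^2|\xx|$, and $|M_4|\le C\delta^2|\xx|^2$. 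In the high-frequency regime I invoke Proposition \ref{fft-symbol} to get $\lambda_\delta\ge(16\delta^2)^{-1}$, $c_\delta\ge m(16\delta^2)^{-1}$, and $|\xx|^{-1}\le\delta$, which give $|M_1|\le C\delta^2$, $|\mathbf{m}_2|,|\mathbf{m}_3|\le C\delta$, $|M_4|\le C$. Plancherel's theorem then turns these symbol bounds into the Sobolev estimates \eqref{a21}--\eqref{a22}: each contribution becomes a sum of the form $\sum_{\delta|\xx|\le 1}\delta^{2p}|\xx|^{2q}|\hat\phi|^2 + \sum_{\delta|\xx|>1}\delta^{2p'}|\hat\phi|^2$, and interpolating against the $H^\eta$ or $H^\beta$ weight $(1+|\xx|^2)^s$ while matching at $|\xx|\sim 1/\delta$ produces exactly the $\min\{2,1+\beta\}$ and $1+\eta$ thresholds stated.

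For the regularity pick-up \eqref{a46}--\eqref{a47} under the singular kernel condition \eqref{a34}, the only change is that the high-frequency bound on $\lambda_\delta$ improves to $\lambda_\delta(\xx)\gtrsim\delta^{2\alpha-2}|\xx|^{2\alpha}$, proved by a rescaling argument in which the fractional behavior of $\omega$ near the origin translates into a $|\delta\xx|^{2\alpha}$ large-argument asymptotic; this sharper bound feeds through the same machinery and permits extracting the $H^\alpha$ norm on the left-hand side, producing the exponents $\min\{2,2-2\alpha+\eta\}$ and $\min\{2,1-\alpha+\beta\}$. The main obstacle is not any single estimate but the bookkeeping: within each of $M_1,\mathbf{m}_2,\mathbf{m}_3,M_4$ one must track, in each regime and for each range of $\eta,\beta,\alpha$, which of the three errors $\lambda_\delta-|\xx|^2$, $|\bb_\delta|-|\xx|$, or $\delta^2c_\delta\lambda_\delta$ dominates, and then glue the low- and high-frequency bounds across the threshold $\delta|\xx|=1$ so that the sharp $\min\{\cdot,\cdot\}$ and fractional exponents emerge with the stated constants depending only on $m$, $M$, and $C_1$.
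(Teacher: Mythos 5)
Your proposal is correct and follows essentially the same route as the paper: express both solutions in Fourier space, subtract, note that $\bb_\delta(\xx)$ is parallel to $\xx$ so the matrix differences reduce to scalar comparisons of $\lambda_\delta$ with $|\xx|^2$ and $|\bb_\delta|$ with $|\xx|$ plus the relaxation term $\delta^2 c_\delta\lambda_\delta$, split into $\delta|\xx|\le1$ (Taylor expansions from Assumption \ref{a5}) and $\delta|\xx|>1$ (uniform lower bounds from Proposition \ref{fft-symbol}), trade $\delta$-powers for $|\xx|$-powers across the threshold $\delta|\xx|=1$ to get the $\min$ exponents, and for the second part strengthen the high-frequency bound to $\lambda_\delta\gtrsim\delta^{2\alpha-2}|\xx|^{2\alpha}$. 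The only terminological quibble is that the passage to the $H^\eta,H^\beta$ norms is not really interpolation but the elementary observation $\delta\le\delta^{1+\eta}|\xx|^\eta$ when $\delta|\xx|\ge1$ (and $\delta^2|\xx|\le\delta^{1+\eta}|\xx|^\eta$ when $\delta|\xx|\le1$), which is exactly what the paper uses.
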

\begin{proof}
Let us take the case $d=3$ for illustration. In the Fourier space, the difference between the nonlocal solution and the counterpart local solution can be explicitly expressed as:
\begin{align} 
\hat{\uu}(\xx)-\hat{\uu}_{\delta}(\xx) = & \big[ (\frac{1}{\lambda_{\delta}(\xx)}- \frac{1}{|\xx|^2} )I_d-( \frac{1}{\lambda_{\delta}(\xx)} \frac{\bb_{\delta}(\xx) \otimes \bb_{\delta} (\xx)}{
q_\delta(\xx)  } - \frac{ \xx \otimes \xx }{|\xx|^4 }) \big] \ \hat{\f}(\xx) \nonumber \\
& \qquad \qquad - i \big[ \frac{(\bb_{\delta}(\xx))}{
q_\delta(\xx) } - \frac{\xx}{|\xx|^2} \big]  \hat{g}(\xx) ,  \label{a28}\\
 \label{a29}
\hat{p}(\xx)-\hat{p}_{\delta}(\xx) = & -i \big[ \frac{(\bb_{\delta}(\xx))}{q_\delta(\xx) } - \frac{\xx}{|\xx|^2} \big]  \hat{\f}(\xx) + \big[ \frac{ \lambda_{\delta}(\xx)}{q_\delta(\xx) }  - 1 \big] \hat{g}(\xx).
\end{align}
Similar to that in the proof of the well-posedness theorem \ref{a37}, we also split $\xx$ to low frequency part ($0<\delta|\xx|<1$) and high frequency part ($\delta|\xx|\ge 1$).

For the low frequency part, $\delta |\xx|<1$, we obtain from Proposition \ref{fft-symbol}
\begin{equation} \label{a18}
\left| \frac{1}{\lambda_{\delta}(\xx)}- \frac{1}{|\xx|^2} \right| \leq \frac{1}{|\xx|^2- \frac{1}{20} |\xx|^2 \ ( \delta |\xx|)^2}-\frac{1}{|\xx|^2} \leq \frac{1}{10} \delta^2,
\end{equation}
for $C$ independent of $\delta$, $\xx$, $\omega$ and $\hat{\omega}$. Also using Proposition \ref{fft-symbol}, we have
\begin{equation}
\begin{split}
& 0\leq \frac{1}{| \bb_{\delta} (\xx) |^2 }-\frac{1}{q_\delta(\xx)   }
\leq  \frac{\delta^2 c_{\delta}(\xx) \lambda_{\delta}(\xx)  }{| \bb_{\delta} (\xx) |^4} \leq \frac{ \delta^2 (|\xx|^2) (M |\xx|^2) }{(\frac{1}{2} |\xx|)^4}=16 M  \delta^2,
\end{split}
\end{equation}
and
\begin{align*}
\left| \frac{1}{\lambda_{\delta}(\xx)} \frac{\bb_{\delta}(\xx) \otimes \bb_{\delta} (\xx)}{q_\delta(\xx)   } - \frac{ \xx \otimes \xx }{|\xx|^4 } \right| \leq & \left| \frac{1}{\lambda_{\delta}(\xx)} \frac{\bb_{\delta}(\xx) \otimes \bb_{\delta} (\xx)}{| \bb_{\delta} (\xx) |^2 } - \frac{1}{|\xx|^2} \frac{ \xx \otimes \xx }{|\xx|^2 } \right| \\
&+  \frac{1}{\lambda_{\delta}(\xx)} \ \left| \frac{\bb_{\delta}(\xx) \otimes \bb_{\delta} (\xx)}{q_\delta(\xx)   } -  \frac{\bb_{\delta}(\xx) \otimes \bb_{\delta} (\xx)}{| \bb_{\delta} (\xx) |^2  } \right|.
\end{align*}
By the fact that $\bb_{\delta} (\xx)=|\bb_{\delta} (\xx)|\xx/|\xx|$, we get
\begin{equation}  \label{a19}
\begin{split}
\left| \frac{1}{\lambda_{\delta}(\xx)} \frac{\bb_{\delta}(\xx) \otimes \bb_{\delta} (\xx)}{q_\delta(\xx)   } - \frac{ \xx \otimes \xx }{|\xx|^4 } \right| & \leq  \left|\frac{1}{\lambda_{\delta}(\xx)}- \frac{1}{|\xx|^2}\right| 
+ \frac{ \delta^2 c_{\delta}(\xx)}{q_\delta(\xx)  }  \\
& < (1+4M) \delta^2, 
\end{split}
\end{equation}
Moreover, we can get
\begin{equation} \label{a20}
\begin{split}
\left| \frac{(\bb_{\delta}(\xx))^T}{q_\delta(\xx) } - \frac{\xx^T}{|\xx|^2} \right| & \leq 
\left| \frac{\bb_{\delta}(\xx) }{q_\delta(\xx)  }- \frac{\bb_{\delta}(\xx) }{| \bb_{\delta} (\xx) |^2} \right| 
 + \left|\frac{\bb_{\delta}(\xx) }{| \bb_{\delta} (\xx) |^2  }- \frac{\xx^T}{|\xx|^2}\right| \\
&\leq \frac{\delta^2 c_{\delta}(\xx) \lambda_{\delta}(\xx)}{|\bb_{\delta}(\xx)|^3}+\left|\frac{1}{|\bb_{\delta}(\xx)|}-\frac{1}{|\xx|}\right| \\ 
&\leq \frac{\delta^2 (|\xx|^2) (M |\xx|^2)}{(\frac{1}{2}|\xx|)^3} + \frac{1}{5} \delta^2 |\xx| \leq (8M+1) \delta^2 |\xx|.
\end{split}
\end{equation}

Now we plug the estimates $\eqref{a18}$, $\eqref{a19}$ and $\eqref{a20}$ into equation $\eqref{a28}$,
\begin{equation}\label{u-vanish-1}
\begin{split}
| \hat{\uu}(\xx)-\hat{\uu}_{\delta}(\xx) | \leq& (2+4M) \ \delta^2  \ \hat{\f}(\xx) + (8M+1) \ \delta^2 \ |\xx| \hat{g}(\xx) \\
 \leq &(2+4M) \ \delta^2  \ \hat{\f}(\xx) + (8M+1) \ \delta^{1+\eta} \ |\xx|^{\eta} \hat{g}(\xx),
\end{split}
\end{equation}
for any $\eta \in [0, 1]$.

To estimate the difference of the pressure, again from the Proposition \ref{fft-symbol},
\begin{equation} \label{a35}
\begin{split}
\left| \frac{ \lambda_{\delta}(\xx) }{q_\delta(\xx) }  - 1 \right| 
& \leq \left| \frac{ \lambda_{\delta}(\xx) - | \bb_{\delta} (\xx) |^2 - \delta^2 c_{\delta}(\xx) \lambda_{\delta}(\xx)   }{| \bb_{\delta} (\xx) |^2 }   \right|  \\
& \leq  \left| \frac{ \lambda_{\delta}(\xx) - | \bb_{\delta} (\xx) |^2   }{| \bb_{\delta} (\xx) |^2 }   \right|
 +\frac{\delta^2 c_{\delta}(\xx) \lambda_{\delta}(\xx) }{| \bb_{\delta} (\xx) |^2 }\\
 & \leq \frac{|\xx|^2-(|\xx|-\frac{\delta^2 |\xx|^2}{10}|\xx|)^2}{(\frac{1}{2}|\xx|)^2} + 
 \frac{\delta^2 M |\xx|^2 |\xx|^2}{(\frac{1}{2}|\xx|)^2}  \leq (4M+1) \delta^2 |\xx|^2,
\end{split}
\end{equation}
Together with $\eqref{a20}$, we get for any $\eta \in [0, 1]$ and  $\beta \in [0, 2]$, 
\begin{equation} \label{a43}
\begin{split}
| \hat{p}(\xx)-\hat{p}_{\delta}(\xx) | \leq (8M+1) \ \delta^2 \ |\xx| \ \hat{\f}(\xx) + (4M+1) \delta^2 |\xx|^2 \ \hat{g}(\xx) \\
\leq (8M+1) \ \delta^{1+\eta} \ |\xx|^{\eta} \ \hat{\f}(\xx) +(4M+1) \ \delta^{\beta} \ |\xx|^{\beta} \hat{g}(\xx) .
\end{split}
\end{equation}

 For the high frequency part, $\delta |\xx| \geq 1$, we have 
\begin{equation} \label{a30}
\begin{split}
\left| (\frac{1}{\lambda_{\delta}(\xx)}- \frac{1}{|\xx|^2}) I_d - ( \frac{1}{\lambda_{\delta}(\xx)} \frac{\bb_{\delta}(\xx) \otimes \bb_{\delta} (\xx)}{q_\delta(\xx)  }  - \frac{1}{|\xx|^2} \frac{ \xx \otimes \xx }{|\xx|^2 }) \right| \\
\leq 2 \left(\frac{1}{\lambda_{\delta}(\xx)} + \frac{1}{|\xx|^2}\right) < 32 \delta^2+2 \delta^2=34 \delta^2,
\end{split}
\end{equation}
and
\begin{equation} \label{a31}
\left| \frac{(\bb_{\delta}(\xx))^T}{q_\delta(\xx)  } - \frac{\xx^T}{|\xx|^2} \right| \leq  \frac{1}{2 \delta  \sqrt{\lambda_{\delta}(\xx)  c_{\delta}(\xx)} } + \frac{1}{|\xx|} <(\frac{8}{\sqrt{m}}+1) \delta;
\end{equation}
Now we plug $\eqref{a30}$ and $\eqref{a31}$ into equation $\eqref{a28}$, we get for $\eta\in [0,1]$, 
\begin{equation}\label{u-vanish-2}
\begin{split}
| \hat{\uu}(\xx)-\hat{\uu}_{\delta}(\xx) | &  \leq 34 \ \delta^2  \ \hat{\f}(\xx) \ + \ (\frac{8}{\sqrt{m}}+1) \ \delta \ \hat{g}(\xx) \\
 &\leq\, 34 \ \delta^2  \ \hat{\f}(\xx) \ + \ (\frac{8}{\sqrt{m}}+1) \ \delta^{1+\eta} \ |\xx|^{\eta} \ \hat{g}(\xx), \
 \end{split}
\end{equation}
Combining \eqref{u-vanish-1} and \eqref{u-vanish-2}, \eqref{a21} is proved.
Moreover, we have
\begin{equation} \label{a36}
| \frac{ \lambda_{\delta}(\xx)}{q_\delta(\xx) }  - 1 | \leq 1+\frac{16}{m},
\end{equation}
Substituting $\eqref{a36}$ and $\eqref{a31}$ into $\eqref{a29}$, we have
\begin{equation} \label{a44}
\begin{split}
| \hat{p}(\xx)-\hat{p}_{\delta}(\xx) | & \leq (\frac{8}{\sqrt{m}}+1) \delta \ \hat{\f}(\xx) + (\frac{16}{m}+1) \ \hat{g}(\xx)  \\
& \leq  (\frac{8}{\sqrt{m}}+1) \ \delta^{1+\eta} \ |\xx|^{\eta}  \ \hat{\f}(\xx) \ + \ \frac{16}{m} \delta^{\beta} \ |\xx|^{\beta}  \hat{g}(\xx), 
\end{split}
\end{equation}
 for any $\eta \in [0, 1]$ and  $\beta \in [0, 2]$. Hence, 
$\eqref{a22}$ follows from \eqref{a43} and \eqref{a44}.

The second part can be proved similarly. The details of the proof can be found in Appendix C.  \end{proof}

\begin{remark}
\begin{enumerate}
\item
The case $g \equiv 0$ in the system $\eqref{a38}$ gives the standard Stokes system. Under such case, we end up with
\begin{equation} 
\left \lVert \uu-\uu_{\delta} \right \rVert_{[L^2(\Omega)]^d} \leq C  \delta^2 \left \lVert \f \right \rVert_{[L^2(\Omega)]^d} ,
\end{equation}
and
\begin{equation} 
\left \lVert p-p_{\delta} \right \rVert_{L^2(\Omega)} \leq C \delta^{\min \{2, 1+\eta \} } \left \lVert \f \right \rVert_{[H^{\eta}(\Omega)]^d} ,
\end{equation}
for any $\eta \in [0,1]$ and some constant $C$ independent of $\delta, \omega, \hat{\omega}$ and $\tilde{\omega}$.

\item
When $g$ is only an $L^2$ function and has no higher regularity, we cannot directly obtain from $\eqref{a22}$ that $p_{\delta}$ converges to $p$ in $L^2$ as $\delta \to 0$.  However, such convergence still holds, but the rate depends on $g$.  To see why this is the case,  we first obtain from the above calculation that $ \big[ \frac{ \lambda_{\delta}(\xx)}{q_\delta(\xx) }  - 1 \big] $ is bounded above by some constant, and it goes to $0$ as $\delta |\xx| \to 0$. Then for any $\epsilon>0$, we can find $N \in \mathbb{N}^+$ that $\sum \limits_{\xx=N}^{\infty} \hat{g}(\xx) < \epsilon$; also, we can find $\delta_0$ such that $\big[ \frac{ \lambda_{\delta}(\xx)}{q_\delta(\xx) }  - 1 \big] < \epsilon$  for all $\delta |\xx| < \delta_0 \ N $. Hence for any $\delta<\delta_0$, we have
\begin{align*}
 \sum \limits_{\xx=1}^{\infty} \big[ \frac{ \lambda_{\delta}(\xx)}{q_\delta(\xx) }  - 1 \big] \hat{g}(\xx) &= 
  \sum \limits_{\xx=1}^{N-1} \big[ \frac{ \lambda_{\delta}(\xx)}{q_\delta(\xx) }  - 1 \big] \hat{g}(\xx) \
  +\sum \limits_{\xx=N}^{\infty} \big[ \frac{ \lambda_{\delta}(\xx)}{q_\delta(\xx) }  - 1 \big] \hat{g}(\xx) \\
& < \epsilon \left \lVert g \right \rVert_{L^2} +C \epsilon.
\end{align*}
since $\epsilon>0$ is arbitrary, we see that 
$$ \sum \limits_{\xx=1}^{\infty} \big[ \frac{ \lambda_{\delta}(\xx)}{q_\delta(\xx) }  - 1 \big] \hat{g}(\xx) \to 0,\; \mbox{ as }\; 
\delta \to 0.$$
 Therefore, we can easily observe from $\eqref{a29}$ that $p-p_{\delta} \to 0$ in $L^2$ as $\delta \to 0$.

\end{enumerate}
\end{remark}

\section{A Modified Nonlocal Model}
\label{sec:reg}
We see from the estimates \eqref{a21}-\eqref{a22} in the
theorem \ref{a39} that the second order convergence to the local limit rests upon the regularity of data.
The latter can be improved by considering a modified nonlocal model with regularized right hand sides
through a convolution operation. 
If the kernel function used in the convolution is in $H^\gamma$, then the regularity of the right hand side is
 lifted by order $\gamma$. Consequently, the 
solution of the nonlocal system also becomes more regular, and the convergence order gets higher as well. 

Based on this observation, we modify the nonlocal model \eqref{a6} as follows:
\begin{equation} \label{a41}
\begin{cases}
-\mathcal{L}_{\delta} \textbf{u}^0_{\delta}(\x) + \mathcal{G}_{\delta}  p^0_{\delta} (\x)= \textbf{f}_{\delta} (\x), & \x \in \Omega, \\
\mathcal{D}_{\delta} \uu^0_{\delta} (\x) - \delta^2 \mathcal{R}_{\delta} p^0_{\delta}(\x) =  g_{\delta}(\x), & \x \in \Omega,
\end{cases}
\end{equation}
where
\begin{equation}
\f_{\delta}(\x)=\int_{\mathbb{R}^d} \bar{\omega}_{\delta} (\x-\y) \ \f(\y) d\y, \qquad g_{\delta}(\x)=\int_{\mathbb{R}^d} \bar{\omega}_{\delta} (\x-\y) \ g(\y) d\y,
\end{equation}
here the kernel $\bar{\omega}_{\delta}$ is rescaled from a function $\bar{\omega}$ and satisfies the following assumption.

\begin{assumption} \label{b1}
\begin{enumerate}
\item $\bar{\omega}$ is non-negative, radial symmetric and is supported in the unit sphere.
\item $\bar{\omega}$ is  integrable in ${\mathbb{R}^d} $ and 
\begin{equation}
 \int_{\mathbb{R}^d} \bar{\omega} (|\x|) d\x=1,
\end{equation}
\item  $\bar{\omega}$ lies in $H^{\gamma}(\mathbb{R}^d),\; \gamma>0$.
\item $\displaystyle
\bar{\omega}_{\delta} (|\x|)=\frac{1}{\delta^d} \bar{\omega} (\frac{|\x|}{\delta}).$
 \end{enumerate}
 \end{assumption}
 
Based on the assumption \ref{b1}, we have that for any $s \in \mathbb{R}$,
\begin{align}
  \label{eq:smooth-f}
  \|f_\delta\|_{[H^{\gamma+s} (\Omega)]^d}\le C_\delta \|f\|_{[H^{s} (\Omega)]^d},\quad \|g_\delta\|_{[H^{\gamma+s} (\Omega)]^d}\le C_\delta \|g\|_{[H^{s} (\Omega)]^d},
\end{align}
where $C_\delta$ is a constant depend on $\delta$ and $m$. Then following the same proof as that in Theorem \ref{a37}, we have that  
\begin{equation} \label{a48}
\left \lVert \uu^0_{\delta} \right \rVert_{[H^{\gamma} (\Omega)]^d} \leq C_\delta (\left \lVert \f \right \rVert_{[L^2 (\Omega)]^d} + \left \lVert g \right \rVert_{L^2 (\Omega)}), 
\end{equation}
and
\begin{equation}  \label{a50}
\left \lVert p^0_{\delta} \right \rVert_{L^2 (\Omega)} \leq C_\delta (\left \lVert \f \right \rVert_{[H^{-\gamma} (\Omega)]^d} + \left \lVert g \right \rVert_{L^2 (\Omega)}),
\end{equation}
where $C_\delta$ depend on $m$ and $\delta$. Moreover, the modified nonlocal model still has the same order of convergence as $\uu$ and $p$ in \eqref{a21} and \eqref{a22}. 
\begin{theorem} \label{a49}
Assume that the kernels $\omega_{\delta}$, $\tilde{\omega}_{\delta}$ and $\hat{\omega}_{\delta}$ satisfy Assumption $\ref{a5}$, and $\bar{\omega}_{\delta}$ satisfy the assumption \ref{b1}. Let  $(\uu^0_{\delta}, p^0_{\delta})$ be the solution of the modified nonlocal system $\eqref{a41}$, and $(\uu, p)$ be the solution of the generalized Stokes system $\eqref{a38}$, then
\begin{equation} \label{a55}
\left \lVert \uu-\uu^0_{\delta} \right \rVert_{[H^{\gamma}(\Omega)]^d} \leq C ( \delta^{ \min \{ 2, 2-\gamma+\eta \} } \left \lVert \f \right \rVert_{[H^{\eta}(\Omega)]^d} + \delta^{\min \{2, 1+\beta-\gamma \} } \left \lVert g \right \rVert_{H^{\beta}(\Omega)}),
\end{equation}
and
\begin{equation} \label{a56}
\left \lVert p-p^0_{\delta} \right \rVert_{H^{\gamma}(\Omega)} \leq C( \delta^{\min \{2, 1-\gamma+ \eta \} } \left \lVert \f \right \rVert_{[H^{\eta}(\Omega)]^d} + \delta^{\min \{ 2, \beta-\gamma \} } \left \lVert g \right \rVert_{H^{\beta}(\Omega)}) ,
\end{equation}
for any $\eta>0$ and $\beta>\gamma $, where the constant $C$ depend only on the constant $m$ and $M$.
\end{theorem}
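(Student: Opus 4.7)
The plan is to reduce Theorem~\ref{a49} to Theorem~\ref{a39} via a decomposition that exploits the translation invariance of every operator in sight. Since convolution with $\bar\omega_\delta$ commutes with each of $\mathcal{L}_\delta, \mathcal{G}_\delta, \mathcal{D}_\delta, \mathcal{R}_\delta$, if $(\uu_\delta, p_\delta)$ solves the un-smoothed system~\eqref{a6} with data $(\f, g)$, then $(\bar\omega_\delta * \uu_\delta,\, \bar\omega_\delta * p_\delta)$ solves the modified system~\eqref{a41} and still has zero mean. Uniqueness from Theorem~\ref{a37} gives $\uu^0_\delta = \bar\omega_\delta * \uu_\delta$ and $p^0_\delta = \bar\omega_\delta * p_\delta$, so I write
\begin{equation*}
  \uu - \uu^0_\delta = (\uu - \bar\omega_\delta * \uu) + \bar\omega_\delta *(\uu - \uu_\delta),
\end{equation*}
and analogously for the pressure, treating the two pieces separately.

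For the smoothing error $\uu - \bar\omega_\delta*\uu$, the Fourier symbol is $1 - \hat{\bar\omega}_\delta(\xx)$. Because $\bar\omega$ is non-negative, radially symmetric, and has unit mass, Taylor-expanding $\cos(\y\cdot\xx)$ gives $|1 - \hat{\bar\omega}_\delta(\xx)| \le C \min(1, \delta^2 |\xx|^2)$, hence by interpolation $|1 - \hat{\bar\omega}_\delta(\xx)| \le C(\delta|\xx|)^{2s}$ for every $s \in [0,1]$. Combining this with the local Stokes multipliers $|\hat{\uu}(\xx)| \le C(|\hat\f|/|\xx|^2 + |\hat g|/|\xx|)$ and $|\hat p(\xx)| \le C(|\hat\f|/|\xx| + |\hat g|)$, and choosing $s$ optimally for each of the four cases (namely $s=(2-\gamma+\eta)/2$, $(1+\beta-\gamma)/2$, $(1-\gamma+\eta)/2$, $(\beta-\gamma)/2$ respectively, clipped to $[0,1]$), one extracts exactly the $\delta^{\min\{2,\,\cdot\}}$ prefactors appearing in~\eqref{a55}--\eqref{a56}; Parseval then upgrades the pointwise bounds to the claimed $H^\gamma$ estimates.

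For the smoothed nonlocal error $\bar\omega_\delta *(\uu - \uu_\delta)$, the pointwise Fourier bounds already derived inside the proof of Theorem~\ref{a39} (equations \eqref{u-vanish-1}, \eqref{u-vanish-2}, \eqref{a43}, \eqref{a44}) give
\begin{equation*}
  |\hat\uu(\xx) - \hat\uu_\delta(\xx)| \le C\delta^2 |\hat\f(\xx)| + C\delta^{\min\{2,1+\beta\}} |\xx|^\beta |\hat g(\xx)|,
\end{equation*}
and similarly for $|\hat p - \hat p_\delta|$. Multiplication by $\hat{\bar\omega}_\delta(\xx)$ then supplies the order-$\gamma$ regularity pickup: Assumption~\ref{b1} together with the Parseval-based bound~\eqref{eq:smooth-f} gives $\|\bar\omega_\delta * v\|_{H^{\gamma+s}} \le C_\delta\|v\|_{H^s}$ with $C_\delta \le C\delta^{-\gamma}$, and the extra factor $\delta^{-\gamma}$ is absorbed by the $\delta^2$ (or $\delta^{\min\{2,1+\beta\}}$) prefactor coming from the nonlocal error, producing the remaining powers of $\delta$ in~\eqref{a55}--\eqref{a56}.

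The main obstacle is the bookkeeping of the optimal interpolation exponent $s$ across the subcases $\eta \lessgtr \gamma$, $\beta \lessgtr \gamma$, combined with the low/high-frequency split $\delta|\xx|\lessgtr 1$ from Proposition~\ref{fft-symbol}, as was already the case in the proof of Theorem~\ref{a39}. When $\gamma > \eta$ or $\gamma > \beta$, a naive pointwise bound loses accuracy in the high-frequency regime; closing the estimate with a constant independent of $\delta$ relies essentially on the decay of $\hat{\bar\omega}_\delta$ encoded in the $H^\gamma$ hypothesis of Assumption~\ref{b1}, which is the new ingredient relative to Theorem~\ref{a39}. The pressure bound~\eqref{a56} follows from the identical decomposition applied to $p - p^0_\delta$.
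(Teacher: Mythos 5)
Your decomposition coincides with the paper's: since convolution with $\bar\omega_\delta$ commutes with both the local Stokes operator and the nonlocal one, the auxiliary pair $(\bar\uu_\delta,\bar p_\delta)$ the paper introduces (local solution with smoothed data) equals $(\bar\omega_\delta*\uu,\bar\omega_\delta*p)$, and its second error term $\bar\uu_\delta-\uu^0_\delta$ is precisely your $\bar\omega_\delta*(\uu-\uu_\delta)$. Your treatment of each piece --- bounding $1-\hat{\bar\omega}_\delta$ against the local Stokes multiplier for the first, and re-using the pointwise Fourier bounds from the proof of Theorem~\ref{a39} together with the decay of $\hat{\bar\omega}_\delta$ for the second --- is the paper's argument, stated with the commutativity made explicit.
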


\begin{proof}
First, we split the error to two terms:
\begin{align*}
  \uu-\uu_\delta^0=&(\uu-\bar{\uu}_\delta)+(\bar{\uu}_\delta-\uu_\delta^0),\\
p-p_\delta^0=&(p-\bar{p}_\delta)+(\bar{p}_\delta-p_\delta^0),
\end{align*}
where $\bar{\uu}_\delta$ and $\bar{p}_\delta$ solve a local Stokes system but with regularized data:
\begin{align*}
- \Delta \bar{\uu}_\delta + \nabla \bar{p}_\delta=& \textbf{f}_\delta,\quad \nabla \cdot \bar{\uu}_\delta =g_\delta.
\end{align*}
For the second term, $\bar{\uu}_\delta-\uu_\delta^0$ and $\bar{p}_\delta-p_\delta^0$,  
by mimicking the proof of theorem $\ref{a39}$ and using the regularity lift result \eqref{eq:smooth-f}, we can conclude the convegence estimate,
\begin{equation} 
\left \lVert \bar{\uu}_\delta-\uu^0_{\delta} \right \rVert_{[H^{\gamma}(\Omega)]^d} \leq C ( \delta^{ \min \{ 2, 2-\gamma+\eta \} } \left \lVert \f \right \rVert_{[H^{\eta}(\Omega)]^d} + \delta^{\min \{2, 1+\beta-\gamma \} } \left \lVert g \right \rVert_{H^{\beta}(\Omega)}),\nonumber
\end{equation}
and
\begin{equation} 
\left \lVert \bar{p}_\delta-p^0_{\delta} \right \rVert_{H^{\gamma}(\Omega)} \leq C( \delta^{\min \{2, 1-\gamma+ \eta \} } \left \lVert \f \right \rVert_{[H^{\eta}(\Omega)]^d} + \delta^{\min \{ 2, \beta-\gamma \} } \left \lVert g \right \rVert_{H^{\beta}(\Omega)}).\nonumber
\end{equation}
The calculation of the first term is also straightforward. Using Fourier transform, we have for $\xx\neq 0$, 
\begin{equation} \label{a45}
\begin{split}
\left( \ \begin{matrix}
\D \hat{\uu}(\xx)-\hat{\bar{\uu}}_{\delta}(\xx)  \\ \D \hat{p}(\xx)-\hat{\bar{p}}_{\delta}(\xx)
\end{matrix} \ \right) 
= \left( \ \begin{matrix}
\D \frac{1}{|\xx|^2}(I_d-\frac{\xx \ \otimes \ \xx}{| \xx |^2  } \ ) & 
\D -i \frac{\xx}{|\xx|^2} \\ \D  -i \frac{\xx^T}{|\xx|^2}  &\D 1
\end{matrix} \ \right) \left( \ \begin{matrix}
\D \hat{\f}(\xx)-\hat{\f}_{\delta}(\xx)  \\ \D \hat{g}(\xx)-\hat{g}_{\delta}(\xx)
\end{matrix} \ \right),
\end{split}
\end{equation}
which implies that
\begin{align}
\label{eq:err-delta-u}
  |\hat{\uu}(\xx)-\hat{\bar{\uu}}_{\delta}(\xx)|\le& |\xx|^{-2}|\hat{\f}(\xx)-\hat{\f}_{\delta}(\xx) |+|\xx|^{-1}|\hat{g}(\xx)-\hat{g}_{\delta}(\xx)|,\\
\label{eq:err-delta-p}
|\hat{p}(\xx)-\hat{\bar{p}}_{\delta}(\xx)|\le& |\xx|^{-1}|\hat{\f}(\xx)-\hat{\f}_{\delta}(\xx) |+|\hat{g}(\xx)-\hat{g}_{\delta}(\xx)|
\end{align}
Using the definition of $\mathbf{f}_\delta$ and $g_\delta$, we have
\begin{equation}
\begin{cases}
\hat{\f}(\xx) - \hat{\f}_{\delta}(\xx) = d_{\delta}(\xx) \ \hat{\f}(\xx), \\
 \hat{g}(\xx) - \hat{g}_{\delta}(\xx)= d_{\delta}(\xx) \ \hat{g}(\xx),
\end{cases}
\end{equation}
where
\begin{align*}
d_{\delta} (\xx)=&\int_{|\s| \leq \delta} \bar{\omega}_{\delta}(|\s|)(1-\cos( \xx \cdot \s)) d \s \\
=&4 \pi \int_0^{\pi/2} \sin (\phi) \int_0^1 r^2 \bar{\omega}(r) (1-\cos(r \ \cos(\phi) \delta |\xx|)) dr d\phi.
\end{align*}
For $\delta |\xx|<1$, we see
\begin{align} \label{a51}
|d_{\delta} (\xx)|=&4 \pi \int_0^{\pi/2} \sin (\phi) \int_0^1 r^2 \bar{\omega}(r) (1-\cos(r \ \cos(\phi) \delta |\xx|)) dr d\phi \nonumber\\
\leq& 4 \pi \int_0^{\pi/2} \sin (\phi) \int_0^1 r^2 \bar{\omega}(r) \big( \frac{1}{2} r^2 \cos^2(\phi) \delta^2 |\xx|^2 \big) dr d\phi \nonumber\\
\leq & \delta^2 |\xx|^2 \int_0^{\pi/2} \frac{1}{2}  \cos^2(\phi) \sin (\phi) d\phi  \int_0^1 4 \pi r^2 \bar{\omega}(r)  dr =\frac{1}{6} \delta^2 |\xx|^2.
\end{align}
while for $\delta |\xx| \geq 1$, 
\begin{equation} \label{a52}
\begin{split}
|d_{\delta} (\xx)| \le 2.
\end{split}
\end{equation}
Using these estimate of $d_\delta(\xx)$ and \eqref{eq:err-delta-u}, \eqref{eq:err-delta-p}, it is easy to show that 
\begin{equation} 
\left \lVert \uu-\bar{\uu}_\delta \right \rVert_{[H^{\gamma}(\Omega)]^d} \leq C ( \delta^{ \min \{ 2, 2-\gamma+\eta \} } \left \lVert \f \right \rVert_{[H^{\eta}(\Omega)]^d} + \delta^{\min \{2, 1+\beta-\gamma \} } \left \lVert g \right \rVert_{H^{\beta}(\Omega)}),\nonumber
\end{equation}
and
\begin{equation} 
\left \lVert p-\bar{p}_\delta \right \rVert_{H^{\gamma}(\Omega)} \leq C( \delta^{\min \{2, 1-\gamma+ \eta \} } \left \lVert \f \right \rVert_{[H^{\eta}(\Omega)]^d} + \delta^{\min \{ 2, \beta-\gamma \} } \left \lVert g \right \rVert_{H^{\beta}(\Omega)}),\nonumber
\end{equation}
which complete the proof.
\end{proof}

\section{Numerical discretization}
\label{sec:num}

Given the well-posedness of the nonlocal system \eqref{a6} with periodic boundary condition, we may think about its numerical simulation as in the case of \cite{DT17, DY5}. According to our analysis in the previous sections, the Fourier spectral method can be considered for numerical approximation. The convergence of such method can be obtained from classical Fourier analysis.

Let $(\uu_{\delta}^N, p_{\delta}^N)$ be Fourier spectral approximation of $(\uu_{\delta}, p_{\delta})$. In other words,
\begin{equation}
\uu_\delta^N(\x)=\frac{1}{(2\pi)^d} \sum_{\xx \in \Z^d,  0 < |\xi_i| \leq N} \hat{\uu}_\delta (\xx)e^{i \xx \cdot \x},
\quad
p_\delta^N(\x)=\frac{1}{(2\pi)^d} \sum_{\xx \in \Z^d, 0 < |\xi_i| \leq N} \hat{p}_\delta (\xx)e^{i \xx \cdot \x},\nonumber
\end{equation}
Here $\xx=(\xi_1, \xi_2)^T$ for $d=2$ and $\xx=(\xi_1, \xi_2, \xi_3)^T$ for $d=3$.
We can easily observe that $(\uu_{\delta}^N, p_{\delta}^N)$ are the projection of $(\uu_{\delta}, p_{\delta})$ over all Fourier modes with wave number no larger than $N$. Under this eyesight, for a fixed $\delta>0$ as $N \to \infty$, we get the following convergence result for $\f \in [L^2(\Omega)]^d, g \in L^2(\Omega)$ and also for smoother data.



\begin{theorem} \label{a32}
Let $(\uu_{\delta}^N(\x), p_{\delta}^N(\x))$ be Fourier spectral approximation to the $\eqref{a6}$. We assume also that the Assumption $\ref{a5}$ hold true for the kernels. Then for $\f \in [L^2(\Omega)]^d$, and $g \in L^2(\Omega)$ we have as $N \to \infty$, 
\begin{equation}
\left \lVert \uu_{\delta}^N-\uu_{\delta} \right \rVert_{[L^2(\Omega)]^d} \to 0 \qquad \mbox{and} \qquad  \left \lVert p_{\delta}^N-p_{\delta} \right \rVert_{L^2(\Omega)} \to 0.
\end{equation}
Moreover, if $\omega(r)$ satisfy $\eqref{a34}$ for some $\alpha \in (0,1)$ and constants $C_1, \ C_2 \in \mathbb{R}^+$, then for any $s \geq 0$ and $\gamma \geq 0$, with $C$ independent of $\delta$, $N$, $\f$, $g$ and $s$, we have as $N \to \infty$,
\begin{equation}
\left \lVert \uu_{\delta}^N -\uu_{\delta} \right \rVert_{[H^{\gamma+\alpha}(\Omega)]^d} + \left \lVert p_{\delta}^N -p_{\delta} \right \rVert_{H^{\gamma}(\Omega)} \leq \frac{C}{N^s} (\left \lVert  \partial^s \f \right \rVert_{[H^{\gamma-\alpha}(\Omega)]^d} + \left \lVert  \partial^s g \right \rVert_{H^{\gamma}(\Omega)}),
\end{equation}
where $s>0$ denote the total order of differentiation of a partial differential operator $\partial^s$.
\end{theorem}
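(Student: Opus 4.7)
The starting observation is that $\uu_\delta^N$ and $p_\delta^N$ are, by construction, the truncations of the Fourier series of $\uu_\delta$ and $p_\delta$ to the frequency cube $\{\xx\in\Z^d:0<|\xi_i|\le N\}$, so the error is nothing but the Fourier tail
\begin{equation}
\uu_\delta(\x)-\uu_\delta^N(\x)=\frac{1}{(2\pi)^d}\sum_{\xx\in\Z^d,\ \max_i|\xi_i|>N}\hat{\uu}_\delta(\xx)e^{i\xx\cdot\x},\nonumber
\end{equation}
and likewise for $p_\delta-p_\delta^N$. By Parseval's identity, computing any weighted $L^2$ or $H^\sigma$ norm of the error reduces to a weighted tail sum of $|\hat{\uu}_\delta(\xx)|^2$ and $|\hat{p}_\delta(\xx)|^2$, so the entire proof becomes a matter of inserting the pointwise Fourier-side bounds already available from Sections \ref{sec:WellPosedness} and \ref{sec:pic} and then isolating a factor $N^{-2s}$.

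For the first claim, I would use the representation \eqref{a8}--\eqref{a9} together with the entrywise estimates on $(A_\delta(\xx))^{-1}$ collected in the proof of Theorem \ref{a37} (separately in the regimes $\delta|\xx|\le 1$ and $\delta|\xx|>1$, using $|\xx|\ge 1$ for $\xx\in\Z^d\setminus\{0\}$) to obtain a $\delta$-dependent pointwise bound
\begin{equation}
|\hat{\uu}_\delta(\xx)|^2+|\hat{p}_\delta(\xx)|^2\;\le\;C_\delta\bigl(|\hat{\f}(\xx)|^2+|\hat{g}(\xx)|^2\bigr),\qquad \xx\ne 0.\nonumber
\end{equation}
Since $\f\in [L^2(\Omega)]^d$ and $g\in L^2(\Omega)$, the full sum $\sum_{\xx\ne 0}(|\hat{\f}(\xx)|^2+|\hat{g}(\xx)|^2)$ is finite, so its tail over $\max_i|\xi_i|>N$ tends to zero, proving the $L^2$ convergence of $\uu_\delta^N$ and $p_\delta^N$.

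For the quantitative second claim I would upgrade these entrywise bounds using the fractional-kernel lower bound $\lambda_\delta(\xx)\gtrsim |\xx|^{2\alpha}$ that underlies the regularity pick-up \eqref{a42}: substituting it into the closed form of $(A_\delta(\xx))^{-1}$, and combining with the uniform lower bound on $|\bb_\delta(\xx)|$ from Proposition \ref{fft-symbol}, yields the sharpened pointwise bounds
\begin{equation}
|\hat{\uu}_\delta(\xx)|\;\lesssim\;|\xx|^{-2\alpha}|\hat{\f}(\xx)|+|\xx|^{-1}|\hat{g}(\xx)|,\qquad |\hat{p}_\delta(\xx)|\;\lesssim\;|\xx|^{-1}|\hat{\f}(\xx)|+|\hat{g}(\xx)|,\nonumber
\end{equation}
with constants independent of $\delta$ and $N$. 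Plugging these into the Parseval tail sums for $\|\uu_\delta^N-\uu_\delta\|_{H^{\gamma+\alpha}}^2$ and $\|p_\delta^N-p_\delta\|_{H^{\gamma}}^2$, and then using $|\xx|>N$ to extract a factor $N^{-2s}$ from each term, produces estimates of the schematic form
\begin{equation}
\sum_{\max_i|\xi_i|>N}|\xx|^{2(\gamma+\alpha-2\alpha)}|\hat{\f}(\xx)|^2\;\le\;\frac{1}{N^{2s}}\sum_{\xx\ne 0}|\xx|^{2(\gamma-\alpha+s)}|\hat{\f}(\xx)|^2\;=\;\frac{1}{N^{2s}}\|\partial^s\f\|_{[H^{\gamma-\alpha}(\Omega)]^d}^2,\nonumber
\end{equation}
with analogous lines for the $g$-term and for the pressure component, which assemble into the claimed inequality.

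The main technical obstacle is not in the convergence itself but in matching exponents cleanly across the low- and high-frequency regimes: in the low-frequency window $\delta|\xx|\le 1$ the fractional assumption on $\omega$ provides no extra decay, so one must absorb these contributions into the high-frequency bound using only $|\xx|\ge 1$, and for the pressure tail the natural bound yields an $H^{\gamma+\alpha-1}$-norm of $\partial^s g$ which must be upgraded to the $H^\gamma$-norm stated in the theorem by the embedding $H^\gamma\hookrightarrow H^{\gamma+\alpha-1}$ (valid since $\alpha<1$). Once this bookkeeping is carried out consistently, no new analytic ingredient beyond Parseval and the Fourier-symbol bounds already proved is required.
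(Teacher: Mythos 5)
The paper itself does not reprove this theorem; it simply observes that it ``is immediately followed from Theorems 10 and 11 in \cite{DT17},'' so your blind proof is in effect reconstructing the underlying argument. Your outline is the right one: since $(\uu_\delta^N,p_\delta^N)$ is the Fourier truncation of $(\uu_\delta,p_\delta)$, Parseval reduces everything to a weighted tail sum of $|\hat{\uu}_\delta(\xx)|^2$, $|\hat{p}_\delta(\xx)|^2$ over $\max_i|\xi_i|>N$, into which one inserts the pointwise Fourier-side bounds and then extracts $N^{-2s}$ from $|\xx|\ge N$. The first claim (qualitative $L^2$ convergence) is exactly as you describe.

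There is, however, one genuine error in the quantitative step. Your claimed pointwise bounds
$|\hat{\uu}_\delta(\xx)| \lesssim |\xx|^{-2\alpha}|\hat{\f}(\xx)| + |\xx|^{-1}|\hat{g}(\xx)|$ and
$|\hat{p}_\delta(\xx)| \lesssim |\xx|^{-1}|\hat{\f}(\xx)| + |\hat{g}(\xx)|$
put $|\xx|^{-1}$ on the cross terms, but the estimate $|\bb_\delta(\xx)|\ge \tfrac12|\xx|$ in Proposition \ref{fft-symbol} holds only in the low-frequency regime $\delta|\xx|\le 1$; at high frequency $|\bb_\delta(\xx)|$ is merely bounded (roughly by $C/\delta$), so $\frac{|\bb_\delta(\xx)|}{q_\delta(\xx)}\le\frac{1}{|\bb_\delta(\xx)|}$ does \emph{not} give $|\xx|^{-1}$. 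The correct uniform bound, worked out in Appendix B of the paper (combining $\lambda_\delta(\xx)\gtrsim|\xx|^{2\alpha}$ with $c_\delta(\xx)\gtrsim m/(16\delta^2)$ at high frequency), is $|\xx|^{-\alpha}$ for those cross coefficients, giving
$|\hat{\uu}_\delta(\xx)| \le C(|\xx|^{-2\alpha}|\hat{\f}(\xx)| + |\xx|^{-\alpha}|\hat{g}(\xx)|)$ and $|\hat{p}_\delta(\xx)| \le C(|\xx|^{-\alpha}|\hat{\f}(\xx)| + |\hat{g}(\xx)|)$.
These are exactly the exponents the theorem wants: weighting by $|\xx|^{2(\gamma+\alpha)}$ (for $\uu$) and $|\xx|^{2\gamma}$ (for $p$) then produces $\|\partial^s\f\|_{[H^{\gamma-\alpha}]^d}$ and $\|\partial^s g\|_{H^\gamma}$ directly, with no need for the $H^\gamma\hookrightarrow H^{\gamma+\alpha-1}$ embedding trick you proposed. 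Also note your remark about the ``pressure tail'' yielding an $H^{\gamma+\alpha-1}$-norm of $\partial^s g$ actually refers to the $g$-term in the \emph{velocity} tail; the $g$-coefficient in $\hat{p}_\delta$ is $O(1)$ and gives $H^\gamma$ outright. With the corrected $|\xx|^{-\alpha}$ bounds the bookkeeping closes cleanly across both frequency regimes.
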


\begin{theorem} \label{a33}
Let $(\uu_{\delta}^N(\x), p_{\delta}^N(\x))$ and $(\uu^N(\x), p^N(\x))$ be the Fourier spectral approximations to the solutions of $\eqref{a6}$ and $\eqref{a38}$ respectively. We assume also that Assumption $\ref{a5}$ hold true for the kernels. Then with $C$ independent of $\delta$, $N$, $\f$ and $g$, we have
\begin{equation}
\left \lVert \uu_{\delta}^N-\uu^N \right \rVert_{[L^2(\Omega)]^d} \leq C (\delta^2 \left \lVert \f^N \right \rVert_{[L^2(\Omega)]^d} +  \delta^{\min \{2, 1+\beta\} } \left \lVert g^N \right \rVert_{H^{\beta}(\Omega)} ),
\end{equation}
and
\begin{equation}
\left \lVert p_{\delta}^N-p^N \right \rVert_{L^2(\Omega)} \leq C( \delta^{\min \{2, 1+\eta\} } \left \lVert \f^N \right \rVert_{[H^{\eta}(\Omega)]^d} + \delta^{\beta }  \left \lVert g^N \right \rVert_{H^{\beta}(\Omega)}),
\end{equation}
for $\eta \in [0,1]$ and $\beta \in [0,2]$.
\end{theorem}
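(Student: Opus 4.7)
The plan is to reduce Theorem \ref{a33} to a direct application of Theorem \ref{a39}, exploiting the fact that all the nonlocal operators appearing in \eqref{a6} are Fourier multipliers and therefore commute with the spectral truncation $P_N$ that projects onto the modes $\{\xx \in \Z^d : 0<|\xi_i|\le N\}$.

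First I would verify that $(\uu_\delta^N, p_\delta^N)$ is itself a solution, on the torus, of the nonlocal Stokes system \eqref{a6} but with the truncated data $(\f^N, g^N)$, and similarly that $(\uu^N, p^N)$ solves the local Stokes system \eqref{a38} with the same truncated data $(\f^N, g^N)$. For the nonlocal side this is immediate from the Fourier representations
\begin{equation*}
\widehat{\mathcal{L}_\delta \uu_\delta^N}(\xx)=-\lambda_\delta(\xx)\widehat{\uu_\delta^N}(\xx), \qquad \widehat{\mathcal{G}_\delta p_\delta^N}(\xx)=i\,\bb_\delta(\xx)\,\widehat{p_\delta^N}(\xx),
\end{equation*}
and analogous identities for $\mathcal{D}_\delta$ and $\mathcal{R}_\delta$: for $0<|\xi_i|\le N$ both sides agree with the equations satisfied by $(\hat{\uu}_\delta(\xx),\hat{p}_\delta(\xx))$, while for $|\xi_i|>N$ both sides vanish by construction. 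The same observation applies on the local side since $-\Delta$ and $\nabla$ are Fourier multipliers. Thus the pair of Fourier-truncated solutions is precisely the pair of exact solutions associated with the truncated data.

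Once this structural point is in place, I would simply apply the conclusions of Theorem \ref{a39} to the solution pairs $(\uu^N,p^N)$ and $(\uu_\delta^N,p_\delta^N)$ with right-hand sides $(\f^N,g^N)$. The estimates \eqref{a21} and \eqref{a22} yield
\begin{equation*}
\|\uu_\delta^N-\uu^N\|_{[L^2(\Omega)]^d}\le C(\delta^2\|\f^N\|_{[L^2(\Omega)]^d}+\delta^{\min\{2,1+\beta\}}\|g^N\|_{H^{\beta}(\Omega)}),
\end{equation*}
\begin{equation*}
\|p_\delta^N-p^N\|_{L^2(\Omega)}\le C(\delta^{\min\{2,1+\eta\}}\|\f^N\|_{[H^{\eta}(\Omega)]^d}+\delta^{\beta}\|g^N\|_{H^{\beta}(\Omega)}),
\end{equation*}
with constants independent of $N$, which are exactly the desired inequalities.

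There is essentially no obstacle here beyond the bookkeeping check that the Fourier cut-off intertwines with the nonlocal operators; this is why the proof is short. If I instead wanted to avoid quoting Theorem \ref{a39}, I could redo its Parseval argument but restrict the sum to $0<|\xi_i|\le N$, using the bounds $|\hat{\uu}(\xx)-\hat{\uu}_\delta(\xx)|$ and $|\hat{p}(\xx)-\hat{p}_\delta(\xx)|$ derived in \eqref{u-vanish-1}, \eqref{u-vanish-2}, \eqref{a43}, \eqref{a44}; since $\widehat{\f^N}(\xx)=\hat{\f}(\xx)$ and $\widehat{g^N}(\xx)=\hat{g}(\xx)$ on the truncation band and vanish outside, the resulting finite sum reproduces the same bounds with $\|\cdot\|$ replaced by $\|\cdot^N\|$ on the right-hand side.
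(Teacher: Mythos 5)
Your proposal is correct and is essentially the argument the paper intends: the spectral truncation $P_N$ commutes with all the Fourier-multiplier operators, so $(\uu_\delta^N,p_\delta^N)$ and $(\uu^N,p^N)$ solve \eqref{a6} and \eqref{a38} with the truncated data $(\f^N,g^N)$, and Theorem \ref{a39} then gives the stated bounds with constants independent of $N$. The paper itself only remarks that these "immediately follow from Theorems 10 and 11 in \cite{DT17}"; your write-up spells out that short intertwining/bookkeeping step instead of outsourcing it to the reference.
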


The above Theorems $\ref{a32}$ and $\ref{a33}$ are immediately followed from Theorems $10$ and $11$ in \cite{DT17}.


The above two theorems imply that such Fourier spectral approximation of \eqref{a6} satisfies the so called asymptotic compatibility (AC) condition, which is a nice property for the numerical discretization to nonlocal models.
The AC property is visually illustrated in Fig. \ref{fig:diagram} (cf. definitions and 
theory initially developed in \cite{TD14}).  It describes the numerical discretization of the local Stokes equation  via nonlocal integral approximation \cite{DT17}.

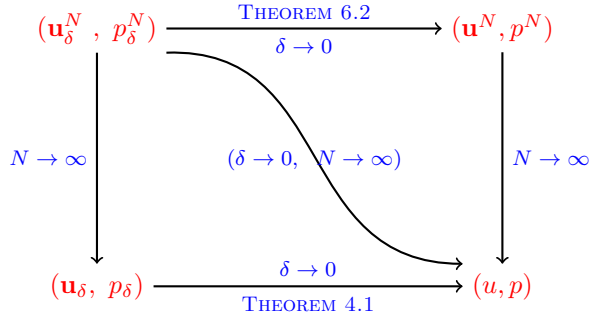
\begin{figure}[t]
  \centering
       \begin{tikzpicture}[scale=0.98]
     \tikzset{to/.style={->,line width=.8pt}}   
 \node(v1) at (0,3.5) {\textcolor{red}{$(\uu_{\delta}^N \ , \ p_{\delta}^N)$}};
  \node (v2) at (5.5,3.5) {\textcolor{red}{$(\uu^N, p^N)$}};
   \node (v3) at (0,0) {\textcolor{red}{$(\uu_{\delta}, \ p_{\delta})$}};
    \node (v4) at (5.5,0) {\textcolor{red}{$(u,p)$}};
     \draw[to] (v1.east) -- node[midway,above] {\footnotesize{\textcolor{blue}{\sc  Theorem \ref{a33}}}}  node[midway,below] {\footnotesize{\textcolor{blue}{$\delta\to0$}}}      
     (v2.west);
     \draw[to] (v1.south) -- node[midway,left] {\footnotesize{\textcolor{blue}{\sc $N \to \infty$}}}  (v3.north);
       \draw[to] (v3.east) -- node[midway,below] {\footnotesize{\textcolor{blue}{\sc Theorem \ref{a39}}}} node[midway,above] {\footnotesize{\textcolor{blue}{$\delta\to0$}}} (v4.west);
       \draw[to] (v2.south) -- node[midway,right] {\footnotesize{\textcolor{blue}{\sc $N \to \infty$}}} (v4.north);
     \draw[to] (v1.south east) to[out = 2, in = 180, looseness = 1.2] node[midway] {\footnotesize\hbox{\shortstack[l]{ {\textcolor{blue}{\sc $\;\;$}}\\ {\textcolor{blue}{($\delta\to0,\; \ N \to \infty$)}} }}} (v4.north west);
      \end{tikzpicture}
   \caption{\footnotesize A diagram for asymptotically compatible schemes and convergence results.}\label{fig:diagram}
\end{figure}

\section{Conclusion and discussions}
\label{sec:con}

In this paper, we propose a nonlocal approximation to the linear Stokes system. Under more general assumptions on the kernel functions (than those studied before, e.g. \cite{DT17}), we prove that the nonlocal model is well-posed and converges to the original Stokes system as the length of the nonlocal interaction goes to zero. 
A key ingredient in achieving this is to add a relaxation term to the incompressibility constraint, which is reminiscent to
commonly used techniques of using small artificial compressibility to approximate the incompressibility.
Furthermore, we can show that the rate of convergence is of second order for regular $\textbf{f}$ and $g$. In addition, we also give more precise upper bounds of the constants in the estimate of the convergence rate and such bounds are independent on the kernel functions.
Comparing to the previous works presented in \cite{DT17,hwi19}, our model allows the use of radial symmetric nonlocal interactions and has more freedom in choosing the nonlocal kernel functions. This offers more
convenience in the practical  implementation of numerical schemes based on the nonlocal approximations
(such as the SPH used in the fluid simulations).

Since Fourier transform is used as the main tool for the mathematical analysis, the techniques in this paper is 
more directly applicable to the case of periodic boundary conditions. An interesting problem is to generalize the nonlocal model to more general geometric domains with general boundary conditions, for instance, 
 the nonlocal Dirichlet boundary condition. Extensions to nonlinear problems are also of much practical importance.
 Although new techniques need to be developed for these further studies, 
the results obtained in this paper give some valuable insights to the future development of nonlocal modeling and nonlocal
relaxations of local continuum models.





\appendix

\section{Proof of Proposition \ref{fft-symbol}}
First, the Fourier symbols $\lambda_{\delta} (\xx)$, $\bb_{\delta}( \xx)$ and $c_\delta(\xx)$ can be expressed by
\begin{equation} \label{a2}
\lambda_{\delta} (\xx)=
\begin{cases}
\D
4 \int_0^{\pi/2} \int_0^{\delta} r \omega_{\delta}(r) (1-\cos(r \ \cos(\phi) |\xx|))dr d\phi  \qquad \mbox{for} \ d=2, \\
\D 4 \pi \int_0^{\pi/2} \sin (\phi) \int_0^{\delta} r^2 \omega_{\delta}(r) (1-\cos(r \ \cos(\phi) |\xx|))dr d\phi  \qquad \mbox{for} \ d=3, 
\end{cases}
\end{equation}
and
\begin{equation}
\bb_{\delta} (\xx)=b_{\delta}(|\xx|)\frac{\xx}{|\xx|},\nonumber
\end{equation}
where the scalar coefficient $b_{\delta}(|\xx|)$ is given by
\begin{equation}
b_{\delta} (\xx)=
\begin{cases}
\D
4 \int_0^{\pi/2} \cos(\phi) \ \int_0^{\delta} r \hat{\omega}_{\delta}(r) \ \sin(r \ \cos(\phi) |\xx|) \ dr d\phi  \qquad \mbox{for} \ d=2, \\
\D 4 \pi \int_0^{\pi/2} \cos(\phi) \sin (\phi) \ \int_0^{\delta} r^2 \hat{\omega}_{\delta}(r) \ \sin(r \ \cos(\phi) |\xx|) \ dr d\phi  \qquad \mbox{for} \ d=3,
\end{cases}\nonumber
\end{equation}
also
\begin{equation} 
c_{\delta} (\xx)=
\begin{cases}
\D
4 \int_0^{\pi/2} \int_0^{\delta} r \tilde{\omega}_{\delta}(r) (1-\cos(r \ \cos(\phi) |\xx|))dr d\phi  \qquad \mbox{for} \ d=2, \\
\D
4 \pi \int_0^{\pi/2} \sin (\phi) \int_0^{\delta} r^2 \tilde{\omega}_{\delta}(r) (1-\cos(r \ \cos(\phi) |\xx|))dr d\phi  \qquad \mbox{for} \ d=3.
\end{cases}\nonumber
\end{equation}

Here, we only address the case $d=3$. Recall that 
\begin{equation}
\begin{split}
\lambda_{\delta}(\xx)=4 \pi \int_0^{\pi/2} \sin (\phi) \int_0^{\delta} r^2 \omega_{\delta}(r) (1-\cos(r \ \cos(\phi)  |\xx|)) dr d\phi  \\
= \frac{4 \pi}{\delta^2} \int_0^{\pi/2} \sin (\phi) \int_0^1 r^2 \omega(r) (1-\cos(r \ \cos(\phi) \delta |\xx|)) dr d\phi.
\end{split}
\end{equation}
We now consider different cases. First, 
 for $\delta |\xx|>1$, we have 
\begin{equation}
\begin{split}
\frac{\delta^2}{4 \pi} \lambda_{\delta}(\xx)= \int_0^{\pi/2} \sin (\phi) \int_0^1 r^2 \omega(r) (1-\cos(r \ \cos(\phi) \delta |\xx|)) dr d\phi \\
\geq \frac{\sqrt{3}}{2} \int_{\pi/3}^{\pi/2}  \int_0^1 r^2 \omega(r) (1-\cos(r \ \cos(\phi) \delta |\xx|)) dr d\phi \\
= \frac{\sqrt{3}}{2} \int_0^{1/2} \frac{1}{\sqrt{1-t^2}} \int_0^1 r^2 \omega(r) (1-\cos(r \ t \delta |\xx|)) dr dt \\
\geq  \frac{\sqrt{3}}{2} \int_0^{1/2}  \int_0^1 r^2 \omega(r) (1-\cos(r \ t \delta |\xx|)) dr dt \\
=\frac{\sqrt{3}}{2} \int_0^1 r^4 \omega(r)  \int_0^{1/2} \frac{1-\cos(rt \delta |\xx|)}{r^2} dt  dr \\
=\frac{\sqrt{3}}{2} \int_0^1 r^4 \omega(r)  \frac{r \delta | \xx | /2- \sin (r \delta | \xx | /2)}{r^3 \delta |\xx|} dr.
\end{split}
\end{equation}

We again consider first the case
$0<r< \frac{1}{\delta |\xx|}$, then $r \delta |\xx|<1$, so
\begin{equation}
\frac{r \delta | \xx | /2- \sin (r \delta | \xx | /2)}{r^3 \delta |\xx|} >  \frac{r \delta | \xx | /2- \sin (r \delta | \xx | /2)}{8 \ (r \delta |\xx|/2)^3} > \frac{1}{2} \cdot  \frac{1}{6} \cdot \frac{1}{8}=\frac{1}{96}.
\end{equation}

Then, if $\frac{1}{\delta |\xx|} \leq r \leq 1$, then $r \delta |\xx|>1$, and
\begin{equation}
\frac{r \delta | \xx | /2- \sin (r \delta | \xx | /2)}{r^3 \delta |\xx|} > \frac{\frac{1}{25} \ r \delta |\xx|/2}{r^3 \delta |\xx|}=\frac{1}{50r^2} \geq \frac{1}{50}.
\end{equation}

Hence we can make the conclusion that  for $\delta |\xx| \geq 1$,
\begin{equation} \label{a13}
\lambda_{\delta}(\xx) \geq \frac{4 \pi}{\delta^2} \frac{\sqrt{3}}{2} \int_0^1 r^4 \omega(r)  \frac{r \delta | \xx | /2- \sin (r \delta | \xx | /2)}{r^3 \delta |\xx|} dr >\frac{4 \pi}{\delta^2} \frac{\sqrt{3}}{2} \frac{6}{96} > \frac{1}{16 \ \delta^2}.
\end{equation}

As for $\delta |\xx| \leq 1$, we apply the following inequality
\[
1- \frac{x^2}{2} \leq \cos(x) \leq 1-\frac{x^2}{2}+\frac{x^4}{24}, \qquad \forall \ 0<x<1,
\]
to get
\begin{equation} \label{a14}
\begin{split}
\lambda_{\delta}(\xx) = \frac{4 \pi}{\delta^2} \int_0^{\pi/2} \sin (\phi) \int_0^1 r^2 \omega(r) (1-\cos(r \ \cos(\phi) \delta |\xx|)) dr d\phi \\
\geq  \frac{4 \pi}{2} |\xx|^2 \int_0^{\pi/2} \cos^2(\phi) \sin (\phi) d \phi \int_0^1 r^4 \omega(r) dr \\
- \ \frac{4 \pi}{24} \ |\xx|^2 \ ( \delta |\xx|)^2 \int_0^{\pi/2} \cos^4(\phi) \sin(\phi) d \phi \int_0^1 r^6 \omega(r) dr \\
\geq  |\xx|^2- \frac{1}{20} |\xx|^2 \ ( \delta |\xx|)^2 \geq \frac{1}{2} |\xx|^2,
\end{split}
\end{equation}
and
\begin{equation} \label{a23}
\begin{split}
\lambda_{\delta}(\xx) = \frac{4 \pi}{\delta^2} \int_0^{\pi/2} \sin (\phi) \int_0^1 r^2 \omega(r) (1-\cos(r \ \cos(\phi) \delta |\xx|)) dr d\phi \\
\leq  \frac{4 \pi}{2} |\xx|^2 \int_0^{\pi/2} \cos^2(\phi) \sin (\phi) d \phi \int_0^1 r^4 \omega(r) dr =|\xx|^2.
\end{split}
\end{equation}

The estimates $\eqref{a13}$ and $\eqref{a14}$ give a uniform lower bound for $\lambda_{\delta}(\xx)$.

By mimicing the calculation of $\lambda_{\delta}(\xx)$, the bound on $c_{\delta}(\xx)$ can be obtained similarly.

The bound on $|\bb_\delta(\xx)|$ is also easy to get.
For $\delta |\xx|<1$, notice that
\[
\sin(x) \geq x- \frac{x^3}{6}  \qquad \forall \ 0<x<1,
\]
then
\begin{equation} \label{a16}
\begin{split}
|\bb_{\delta}(\xx)| =&4 \pi \int_0^{\pi/2} \cos(\phi) \sin (\phi) \ \int_0^{\delta} r^2 \hat{\omega}_{\delta}(r) \ \sin(r \ \cos(\phi) |\xx|) \ dr d\phi  \\
=&\frac{4 \pi}{\delta} \int_0^{\pi/2} \cos(\phi) \sin (\phi) \ \int_0^{1} r^2 \hat{\omega}(r) \ \sin(r \ \cos(\phi) \delta |\xx|) \ dr d\phi  \\
  \geq& 4 \pi |\xx| \int_0^{\pi/2} \cos^2(\phi) \sin (\phi) d \phi \ \int_0^1 r^3 \hat{\omega}(r) dr  \\
&-  \frac{4 \pi |\xx|}{6}  (\delta |\xx|)^2 \int_0^{\pi/2} \cos^4(\phi) \sin (\phi) d \phi \ \int_0^1 r^5 \hat{\omega}(r) dr \\
 \geq &|\xx|-\frac{(\delta |\xx|)^2}{10} |\xx| \geq \frac{1}{2} |\xx|.
\end{split}
\end{equation}

\section{Proof of \eqref{a42}}
For $\delta |\xx|>1$, we have
\begin{equation} \label{a40}
\begin{split}
\lambda_{\delta}(\xx) \geq& \frac{4\pi}{\delta^2} \int_0^{\pi/3} \sin (\phi) \int_0^1 r^2 \omega(r) (1- \cos (r \ \cos (\phi) \delta |\xx|)) dr \ d\phi \\
\geq& \frac{4\pi}{\delta^2} \int_0^{\pi/3} \sin (\phi) \int_0^1 \frac{C_1}{r^{1+2 \alpha}} (1- \cos (r \ \cos (\phi) \delta |\xx|)) dr \ d\phi \\
\geq& \frac{4\pi (\delta |\xx|)^{2 \alpha}}{\delta^2} \int_0^{\pi/3} \sin (\phi) \ \cos^{2 \alpha} (\phi) \int_0^{\cos(\phi) \delta |\xx|} \frac{C_1}{r^{1+2 \alpha}}(1-\cos(r)) dr \ d\phi \\
\geq& \frac{4\pi (\delta |\xx|)^{2 \alpha}}{\delta^2} \int_0^{\pi/3} \sin (\phi) \ \cos^{2 \alpha} (\phi) \ d \phi \ \int_0^{ 1 /2} \frac{C_1}{r^{1+2 \alpha}}(1-\cos(r)) dr \\
\geq& \frac{C_1}{6}  \frac{1}{\delta^{2-2 \alpha }} |\xx|^{2 \alpha}.
\end{split}
\end{equation}
Meanwhile, for $\delta |\xx| \leq 1$, we obtain from $\eqref{a14}$ that $$\lambda_{\delta}(\xx) \geq \frac{1}{2}|\xx|^2 \geq \frac{|\xx|^{2-2\alpha}}{2} |\xx|^{2 \alpha}.$$

Since $\delta \in (0,1)$ and $\xx \in \mathbb{Z}^d$, we see $\lambda_{\delta}(\xx)$ has a uniform lower bound $C \ |\xx|^{2 \alpha}$ in either way, where the constant $C=\min \ \{ \ 1, \frac{C_1}{6} \}$. Using such bound, we are able to control the coefficients in $\eqref{a8}$ and $\eqref{a9}$. First,
\begin{equation}
\left| \frac{1}{\lambda_{\delta}(\xx)} \ \frac{\bb_{\delta}(\xx) \otimes \bb_{\delta} (\xx)}{| \bb_{\delta} (\xx) |^2+ \delta^2  c_{\delta}(\xx) \lambda_{\delta}(\xx)  } \right| = \frac{1}{\lambda_{\delta}(\xx)} \ \frac{| \bb_{\delta} (\xx) |^2}{| \bb_{\delta} (\xx) |^2+ \delta^2 c_{\delta}(\xx) \lambda_{\delta}(\xx)} \leq  \frac{1}{C}  |\xx|^{-2\alpha}.
\end{equation}  
In addition, 
 for $\delta |\xx|>1$,
\begin{equation}
\begin{split}
 \frac{ | (\bb_{\delta}(\xx))^T |}{| \bb_{\delta} (\xx) |^2+ \delta^2 c_{\delta}(\xx) \lambda_{\delta}(\xx) } \leq \frac{ | (\bb_{\delta}(\xx))^T | }{ 2 \delta \sqrt{c_{\delta}(\xx) \lambda_{\delta}(\xx)} \ | \bb_{\delta} (\xx) |  }=\frac{1}{2 \delta \  \sqrt{c_{\delta}(\xx) \lambda_{\delta}(\xx)} } \\
 \leq \frac{1}{2 \delta \sqrt{\frac{m}{16 \delta^2} C |\xx|^{2 \alpha}}} = \frac{2}{\sqrt{C \ m}} |\xx|^{-\alpha},
 \end{split}
\end{equation}
while for $\delta |\xx|<1$, 
\begin{equation} 
 \frac{ | (\bb_{\delta}(\xx))^T |}{| \bb_{\delta} (\xx) |^2+ \delta^2 c_{\delta}(\xx) \lambda_{\delta}(\xx)  } \leq \frac{1}{| \bb_{\delta} (\xx) |} \leq \frac{2}{ |\xx|} \leq 2 |\xx|^{-\alpha} .
\end{equation}

Finally, we need to bound the term $\frac{ \lambda_{\delta}(\xx)}{
q_\delta(\xx)
}$ above. Combining all the above bounds and return to $\eqref{a8}$ and $\eqref{a9}$, we see that
\begin{equation} 
\begin{split}
| \hat{\uu}_{\delta}(\xx) | \leq C( \ |\xx^{-2\alpha}| \ |\hat{\f}(\xx)|  \ + \  | \xx^{-\alpha}|  \ |\hat{g}(\xx) | \ ),  \\
| \hat{p}_{\delta}(\xx)| \leq C( \ |\xx^{-\alpha}| \ |\hat{\f}(\xx)|  \ + \  |  \hat{g}(\xx) | \ ),
\end{split}
\end{equation}
here the constant $C$ depends only on $C_1$. Hence we have $\eqref{a42}$.

\section{Proof of the second part of Theorem \ref{a39}}
Again, we divide into two cases. First, 
for $\delta |\xx| \leq 1$, by following $\eqref{a18}$ and $\eqref{a19}$ we deduce 
\begin{equation}
\begin{split}
 | \xx |^{\alpha} \bigg| (\frac{1}{\lambda_{\delta}(\xx)}- \frac{1}{|\xx|^2} )I_d-( \frac{1}{\lambda_{\delta}(\xx)} \frac{\bb_{\delta}(\xx) \otimes \bb_{\delta} (\xx)}{| \bb_{\delta} (\xx) |^2+ \delta^2  c_{\delta}(\xx) \lambda_{\delta}(\xx)   } - \frac{ \xx \otimes \xx }{|\xx|^4 }) \bigg| \\
 <(4M+2) \delta^2 | \xx |^{\alpha}  \leq (4M+2) \delta^{ \min \{ 2, 2-2\alpha+\eta \} } |\xx|^{\eta-\alpha}.
\end{split}
\end{equation}
Moreover, recall that $\eqref{a20}$, for any $\eta \in [0, 1+\alpha]$ we have
\begin{equation}
 | \frac{(\bb_{\delta}(\xx))^T}{q_\delta(\xx)} - \frac{\xx^T}{|\xx|^2} | \leq (8M+1) \delta^2 |\xx|  \leq (8M+1) \delta^{1-\alpha+\eta} |\xx|^{\eta-\alpha},
\end{equation}
and
\begin{equation}
 | \xx |^{\alpha} \ | \frac{(\bb_{\delta}(\xx))^T}{q_\delta(\xx)} - \frac{\xx^T}{|\xx|^2} | \leq (8M+1) \delta^2 |\xx| \ |\xx|^{\alpha} \leq (8M+1) \delta^{1-\alpha+\eta} |\xx|^{\eta}.
\end{equation}
Finally, recall that $\eqref{a35}$, for any $\beta \in [0,2]$ we deduce
\begin{equation}
 | \frac{ \lambda_{\delta}(\xx) }{q_\delta(\xx)}  - 1 | \leq (4M+1) \delta^{\beta} |\xx|^{\beta}.
 \end{equation}

Next, for $\delta |\xx|>1$, since we obtain the strengthened lower bound for $\lambda_{\delta}(\xx)$ from $\eqref{a40}$, we can plug it into $\eqref{a30}$ and see
\begin{equation}
\begin{split}
|\xx|^{\alpha} \ \left|(\frac{1}{\lambda_{\delta}(\xx)}- \frac{1}{|\xx|^2}) I_d - ( \frac{1}{\lambda_{\delta}(\xx)} \frac{\bb_{\delta}(\xx) \otimes \bb_{\delta} (\xx)}{| \bb_{\delta} (\xx) |^2+ \delta^2  c_{\delta}(\xx) \lambda_{\delta}(\xx)  }  - \frac{1}{|\xx|^2} \frac{ \xx \otimes \xx }{|\xx|^2 }) \right| \\
\leq 2 |\xx|^{\alpha} (\frac{1}{\lambda_{\delta}(\xx)} + \frac{1}{|\xx|^2}) < 2 |\xx|^{\alpha} ( \frac{6}{C_1} \delta^{2-2 \alpha } |\xx|^{-2 \alpha} + |\xx|^{-2}) \\
< 2C \ \delta^{2-2\alpha} |\xx|^{-\alpha} < 2C \ \delta^{ \min \{ 2, 2-2\alpha+\eta \} } |\xx|^{\eta-\alpha} .
\end{split}
\end{equation}
Now, for any $\eta \in [0, 1+\alpha]$, we plug $\eqref{a40}$ again into $\eqref{a31}$ and get
\begin{equation}
\begin{split}
| \frac{(\bb_{\delta}(\xx))^T}{q_\delta(\xx) } - \frac{\xx^T}{|\xx|^2} | \leq  \frac{1}{2 \delta  \sqrt{\lambda_{\delta}(\xx)  c_{\delta}(\xx)} } + \frac{1}{|\xx|} \\
 \leq (2 \delta \sqrt{\frac{C_1}{6} \delta^{2 \alpha-2 } |\xx|^{2 \alpha}  \frac{m}{16 \delta^2}} \ )^{-1} + \frac{1}{|\xx|} \leq  C  \delta^{1-\alpha} |\xx|^{-\alpha}  \leq C  \delta^{1-\alpha+\eta} |\xx|^{-\alpha+\eta}.
\end{split}
\end{equation}
Therefore,
\begin{equation}
|\xx|^{\alpha} | \frac{(\bb_{\delta}(\xx))^T}{q_\delta(\xx) } - \frac{\xx^T}{|\xx|^2} | \leq  C \delta^{1-\alpha} \leq C \delta^{1-\alpha+\eta} |\xx|^{\eta}.
\end{equation}
Fnally, recall that $\eqref{a36}$, for any $\beta \in [0,2]$ we have 
\begin{equation} 
| \frac{ \lambda_{\delta}(\xx)}{q_\delta(\xx)}  - 1 | \leq 1+\frac{16}{m} \leq (1+\frac{16}{m} ) \delta^{\beta} |\xx|^{\beta}.
\end{equation}

Hence we combine the two cases above and return to the equation $\eqref{a28}$  and $\eqref{a29}$ to obtain the estimate $\eqref{a46}$ and $\eqref{a47}$.


\bibliographystyle{abbrv}
\bibliography{reference}

\end{document}